\newtheorem{theorem}{Theorem}[section]
\newtheorem{prop}[theorem]{Proposition}
\newtheorem{conj}[theorem]{Conjecture}
\theoremstyle{definition}
\newtheorem{definition}[theorem]{Definition}
\theoremstyle{remark}
\DeclareMathAlphabet{\mathpzc}{OT1}{pzc}{m}{it}
\numberwithin{equation}{section}
\newcommand{\I}{\mathrm{i}}
\newcommand{\E}{\mathrm{e}}
\DeclareMathOperator{\CS}{CS}
\DeclareMathOperator{\Li}{Li_2}
\DeclareMathOperator{\sign}{sgn}
\DeclareMathOperator{\Vol}{Vol}
\DeclareMathDelimiter{\Norm}{\mathord}{largesymbols}{"3E}{largesymbols}{"3E}
\DeclareMathOperator{\Rogers}{L}
\begin{document}
\allowdisplaybreaks[4]
\abovedisplayskip=2pt
\belowdisplayskip=2pt
\baselineskip 16pt
\parskip 8pt
\sloppy


\title[Braids,  Complex Volume \& Cluster Algebra]{Braids,  Complex Volume, and Cluster Algebra}


\author[K. Hikami]{Kazuhiro \textsc{Hikami}}

\address{Faculty of Mathematics,
  Kyushu University,
  Fukuoka 819-0395, Japan.}

\email{KHikami@gmail.com}

 \author[R. Inoue]{Rei \textsc{Inoue}}

 \address{Department of Mathematics and Informatics,
   Faculty of Science,
   Chiba University,
   Chiba 263-8522, Japan.}

 \email{reiiy@math.s.chiba-u.ac.jp}


\date{April 17, 2013, revised on November 1, 2014.}

\begin{abstract}
  We try to give a cluster algebraic interpretation of complex volume of
  knots.
  We construct
  the $\mathsf{R}$-operator
  from  the cluster
  mutations,
  and we show that it is regarded as a hyperbolic octahedron.
  The cluster variables are interpreted as edge parameters used by
  Zickert in computing complex volume.
\end{abstract}


\keywords{knot, hyperbolic volume, complex volume, cluster algebra}



\maketitle

\section{Introduction}

Geometrical property of quantum invariants receives renewed interests
since proposed is the volume conjecture~\cite{Kasha96b,MuraMura99a},
which suggests a relationship between
the colored Jones polynomial and the hyperbolic volume of knot
complements.
As
quantum invariants of knots such as the colored Jones polynomial are
constructed  by use of the Artin braid relation,
it is interesting to 
study a hyperbolic geometrical solution of the braid relation.

A fundamental object in the 3-dimensional  hyperbolic geometry
is an ideal tetrahedron.
When
a manifold  is constructed from a set of ideal tetrahedra,
its
complex volume,
\emph{i.e.}, a
complexification of hyperbolic volume,
is written in terms of the extended
Rogers dilogarithm function~\cite{WDNeum00a}.
On the other hand,
the  cluster algebra
has been developed recently since
the pioneering  work~\cite{FominZelev02a}, and
there exist many applications including representation theory,
Teichm\"uller theory,  
integrable systems and so on.
The dilogarithm function plays an important
role also in the cluster algebra~\cite{FockGonc03,Nakanishi10}.

Purpose of this article is to give a cluster algebraic interpretation
of complex volume of knots.
In our previous paper~\cite{HikamiRInoue12a}, we gave an
interpretation of the cluster mutation as a hyperbolic ideal tetrahedron,
and we proposed a method to
compute the complex volume of 2-bridge knots.
In this article,
we first give a geometric interpretation of
the $\mathsf{R}$-operator, which can be constructed from
the cluster mutation based on a relation with the Teichm{\"u}ller
theory~\cite{Dynnik02a,DehDynRolWie08}.
We find that
the $\mathsf{R}$-operator in Theorem~\ref{thm:main}
is identified with a 
hyperbolic octahedron composed of four ideal tetrahedra, and
that the cluster variable corresponds to an edge parameter used by
Zickert~\cite{Zicke09}
for a computation of complex volume.
Our main claim is in Theorem~\ref{thm:main2} and Conjecture~\ref{conj}:
following a method of Zickert, we propose
a formula of complex volume in terms of cluster variables.
Our construction can be naturally quantized
with a help of the quantum cluster algebra~\cite{HikamiRInoue}.

This paper is organized as follows.
In section 2, after explaining minimal basics of cluster algebra, 
we introduce the $\mathsf{R}$-operator.
In Section 3, we interpret the $\mathsf{R}$-operator in 
hyperbolic geometry, and formulate the complex volume of knots at  
Theorem~\ref{thm:main2} and Conjecture~\ref{conj}.
Some examples of numerical calculation
are presented in section 4.

\section{Cluster Algebra and Braid Relation}
\subsection{Cluster Variable}

We briefly introduce a notion of cluster algebra used in this article.
A basic reference is~\cite{FominZelev02a}.

A cluster seed $(\boldsymbol{x}, \mathbf{B})$
is a pair of
\begin{itemize}
\item a cluster variable $\boldsymbol{x}=(x_1, \dots, x_N)$: an
  $N$-tuple of algebraically
  independent variables,

\item an exchange matrix $\mathbf{B}=(b_{ij})$: an $N\times N$ skew
  symmetric integer matrix.

\end{itemize}
For each $k=1,\dots, N$, we define the mutation $\mu_k$ of
$(\boldsymbol{x},
\mathbf{B})$
by
\begin{equation}
  \mu_k
  \left(
    \boldsymbol{x},
    \mathbf{B}
  \right)
  =
  ( 
    \widetilde{\boldsymbol{x}},
    \widetilde{\mathbf{B}}
    ) ,
\end{equation}
where
\begin{itemize}
\item 
  $\widetilde{\boldsymbol{x}} = 
  (\widetilde{x}_1, \dots, \widetilde{x}_N)$ is
  \begin{equation}
    \widetilde{x}_i
    =
    \begin{cases}
      x_i, 
      &
      \text{for $i \neq k$,}
      \\[2ex]
      \displaystyle
      \frac{1}{x_k} \,
      \left(
        \prod_{j: b_{jk}>0}
        x_j^{~b_{jk}}
        +
        \prod_{j: b_{jk} < 0}
        x_j^{~-b_{jk}}
      \right) ,
      &
      \text{for $i = k$,}
    \end{cases}
  \end{equation}

\item 
  $\widetilde{\mathbf{B}}=
  ( \widetilde{b}_{ij} )$ is
  \begin{equation}
    \label{mutation_B}
    \widetilde{b}_{ij}
    =
    \begin{cases}
      - b_{ij},
      & \text{for $i=k$ or $j=k$},
      \\[2ex]
      \displaystyle
      b_{ij}
      + \frac{
        \bigl| b_{ik} \bigr| \, b_{kj}
        +
        b_{ik} \, \bigl| b_{kj} \bigr|
      }{2},
      & \text{otherwise.}
    \end{cases}
  \end{equation}
\end{itemize}
The pair $(\widetilde{\boldsymbol{x}}, \widetilde{\mathbf{B}})$
is again a cluster seed.
We remark that the mutation $\mu_k$ is involutive, 
and that
we have 
$\mu_j \,\mu_k (\boldsymbol{x}, \mathbf{B})
= \mu_k \, \mu_j (\boldsymbol{x}, \mathbf{B})$
if $b_{jk} = 0$.  

In terms of the cluster variable  $\boldsymbol{x}$, we introduce the
$y$-variable,
$\boldsymbol{y}=(y_1, \dots, y_N)$,
defined by~\cite{FominZelev07a}
\begin{equation}
  \label{y_from_x}
  y_j = \prod_k x_k^{~ b_{kj}} .
\end{equation}
The mutation $\mu_k$ induces a mutation of a pair
$( \boldsymbol{y}, \mathbf{B})$:
\begin{equation}
  \mu_k( \boldsymbol{y}, \mathbf{B})
  =
  (
    \widetilde{\boldsymbol{y}},
    \widetilde{\mathbf{B}}
    ) ,
\end{equation}
where $\widetilde{\mathbf{B}}$ is~\eqref{mutation_B}, and
$\widetilde{\boldsymbol{y}}=
(\widetilde{y}_1, \dots, \widetilde{y}_N)$
with
$\widetilde{y}_j = \prod_k \widetilde{x}_k^{~\widetilde{b}_{kj}}$
is given by
\begin{equation}
  \widetilde{y}_i
  =
  \begin{cases}
    y_k^{~-1} ,
    & \text{for $i = k$,}
    \\[2ex]
    y_i \, \left( 1+y_k^{~-1} \right)^{-b_{ki}} ,
    & \text{for $i \neq k$, $b_{ki} \geq 0$,}
    \\[2ex]
    y_i \left( 1+ y_k \right)^{-b_{ki}} ,
    & \text{for $i \neq k$, $b_{ki} \leq 0$.}
  \end{cases}
\end{equation}

\subsection{\mathversion{bold}$\mathsf{R}$-operator}
We set the $7$ by $7$ exchange matrix $\mathbf{B}$ as
\begin{equation}
  \label{B_matrix_7}
  \mathbf{B} =
  \begin{pmatrix}
    0& 1 & -1 & 0 & 0 & 0 & 0     \\
    -1 & 0 & 0 & 1 & 0 & 0 & 0 \\
    1 & 0 & 0 & -1 & 0 & 0 & 0 \\
    0 & -1 & 1 & 0 & 1 & -1 & 0\\
    0 & 0 & 0 & -1 & 0 & 0 & 1 \\
    0 & 0 & 0 & 1 & 0 & 0 & -1 \\
    0 & 0 & 0 & 0 & -1 & 1 & 0
  \end{pmatrix} .
\end{equation}
By regarding the matrix element as
\begin{equation}
  \label{b_and_arrow}
  b_{ij}
  =
  \# \left\{ \text{arrows from $i$ to $j$} \right\}
  -
  \# \left\{ \text{arrows from $j$ to $i$} \right\} ,
\end{equation}
exchange matrix $\mathbf{B}$ corresponds to quiver, which is dual to
triangulated surface
(see, \emph{e.g.},~\cite{FomiShapThur08a}).
In our case~\eqref{B_matrix_7},
we have the quiver and
the  triangulated disk
depicted  in Fig.~\ref{fig:quiverR}.

\begin{figure}[tbhp]
  \begin{minipage}{0.47\linewidth}
    \centering
    \includegraphics{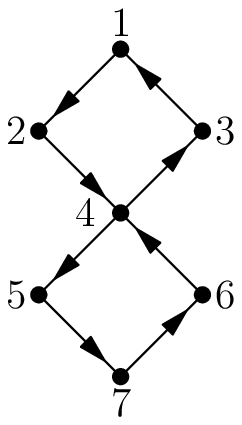}
  \end{minipage}
  \hfill
  \begin{minipage}{0.47\linewidth}
    \centering
    \includegraphics{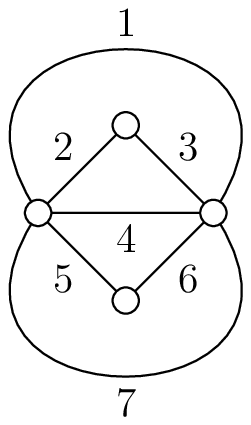}
  \end{minipage}
  \caption{Quiver and triangulated disk}
  \label{fig:quiverR}
\end{figure}

For our later use, we introduce the $\mathsf{R}$-operator acting on
the cluster variables associated with the quiver in
Fig.~\ref{fig:quiverR}.

\begin{definition}
  We define the $\mathsf{R}$-operator by
  \begin{equation}
    \label{R}
    \mathsf{R}=
    s_{3,5} \, s_{2,5} \,  s_{3,6}  \, \mu_4 \, \mu_2 \, \mu_6 \,
    \mu_4 .
  \end{equation}
\end{definition}

Here we have used the permutation $s_{i,j}$ of subscripts $i$ and $j$
in seeds,
\begin{equation*}
  s_{i,j} ( \dots, x_i, \dots, x_j , \dots)
  =
  (\dots, x_j, \dots, x_i, \dots) .
\end{equation*}
Actions on the exchange matrix are defined in the same manner.
Note that we have
\begin{equation}
  \label{Rinv}
  \mathsf{R}^{-1} =
  s_{3,6} \, s_{2,5} \, s_{3,5} \, \mu_4 \, \mu_5 \, \mu_3 \, \mu_4 .
\end{equation}

The permutations are included in the $\mathsf{R}$-operator so that
the exchange matrix $\mathbf{B}$~\eqref{B_matrix_7}
is invariant under $\mathsf{R}$.
Explicitly  we have
\begin{align}
   \mathsf{R}^{\pm 1} (\boldsymbol{x}, \mathbf{B}) =
  (\mathsf{R}^{\pm 1}(\boldsymbol{x}),  \mathbf{B} ) ,
\end{align}
where
  \begin{align}
    &
    \mathsf{R}(\boldsymbol{x}) =
     \begin{pmatrix}
     x_1
     \\
     x_5
     \\[1ex]
     \displaystyle
     \frac{x_1 \, x_3 \, x_5 + x_3 \, x_4 \, x_5
       + x_1 \, x_2 \,  x_6}{
       x_2 \, x_4}
     \\[2ex]
     \displaystyle
     \frac{
       x_1 \, x_3 \, x_4 \, x_5+ x_3 \, x_4^{~2} \, x_5 +
       x_1 \ x_3 \, x_5 \, x_7 + x_3 \, x_4 \, x_5 \, x_7
       + x_1 \, x_2 \, x_6 \, x_7}{
       x_2 \, x_4 \, x_6}
     \\[2ex]
     \displaystyle
     \frac{
       x_3 \, x_4 \, x_5 + x_3 \, x_5 \, x_7 + x_2 \, x_6 \, x_7}{
       x_4 \, x_6}
     \\[2ex]
     x_3 
     \\
     x_7
   \end{pmatrix}^\top ,
   \notag
  \\[2mm]
  &
  \mathsf{R}^{-1}(\boldsymbol{x})
  =
  \begin{pmatrix}
    x_1
    \\[1ex]
    \displaystyle
    \frac{x_1 \, x_3 \, x_5 +
      x_1 \,  x_2 \,  x_6 + x_2 \, x_4 \,  x_6
    }{
      x_3 \,  x_4}
    \\[2ex]
    x_6
    \\[1ex]
    \displaystyle
    \frac{
      x_1\, x_2 \,  x_4 \, x_6 + x_2 \, x_4^{~2} \, x_6
      + x_1 \,  x_3 \,  x_5 \,  x_7
      + x_1 \,  x_2 \,  x_6 \,  x_7 + 
      x_2 \, x_4 \, x_6 \,  x_7
    }{
      x_3 \,  x_4 \, x_5}
    \\[2ex]
    x_2
    \\[1ex]
    \displaystyle
    \frac{
      x_2 \,  x_4 \,  x_6
      + x_3 \, x_5 \,  x_7 + x_2 \,  x_6 \, x_7
    }{
      x_4 \, x_5}
    \\[2ex]
    x_7
  \end{pmatrix}^\top .
  \label{R_on_x}
\end{align}

Correspondingly, 
actions of the $\mathsf{R}$-operator,~\eqref{R} and~\eqref{Rinv},
on the $y$-variable are respectively given as follows:
  \begin{align}
    &
    \mathsf{R}(\boldsymbol{y})
    =
    \begin{pmatrix}
      \displaystyle
      y_1 \, \left(
        1 + y_2 + y_2 y_4
      \right) 
      \\[1ex]
      \displaystyle
      \frac{ y_2 \, y_4 \, y_5 \,  y_6}{
        1+ y_2 + y_6 + y_2 \, y_6 + y_2\, y_4\, y_6}
      \\[2ex]
      \displaystyle
      \frac{1+y_2+y_6+y_2 \, y_6 + y_2 \, y_4 \, y_6}{
        y_2 \, y_4}
      \\[2ex]
      \displaystyle
      \frac{y_4}{
        (1+y_2+y_2 \, y_4) \, ( 1+y_6+y_4 \, y_6)
      }
      \\[2ex]
      \displaystyle
      \frac{1+y_2+y_6+y_2 \, y_6 + y_2 \, y_4 \, y_6}{
        y_4 \, y_6}
      \\[2ex]
      \displaystyle
      \frac{ y_2 \, y_3 \, y_4 \, y_6}{
        1+ y_2 + y_6 + y_2 \, y_6 + y_2\, y_4 \, y_6}
      \\[2ex]
      \displaystyle
      \left(
        1+y_6+y_4 \, y_6
      \right) \, y_7
    \end{pmatrix}^\top ,
    \notag
    \\[2mm]
    &
    \mathsf{R}^{-1}(\boldsymbol{y})
    =
    \begin{pmatrix}
      \displaystyle
      \frac{y_1 \, y_3 \,  y_4}{1 + y_4 + y_3 \, y_4}
      \\[3ex]
      \displaystyle
      \frac{y_5}{
        1 + y_4 + y_3 \, y_4 + y_4 \,  y_5 +   y_3 \,  y_4 \,  y_5}
      \\[2ex]
      \left(
        1 + y_4 + y_3 \,  y_4 + y_4 \,  y_5 + y_3 \,  y_4 \,  y_5
      \right) \, y_6
      \\[1ex]
      \displaystyle
      \frac{(1 + y_4 +    y_3 \, y_4) (1 + y_4 + y_4 \, y_5)}{
        y_3\, y_4  \,  y_5}
      \\[2ex]
      y_2  \, \left(
        1 + y_4 + y_3 \, y_4 + y_4 \,  y_5 + y_3 \,  y_4 \,  y_5
      \right)
      \\[1ex]
      \displaystyle
      \frac{y_3}{
        1 + y_4 + y_3 \,  y_4 + y_4 \,  y_5 + y_3 \,  y_4 \,  y_5}
      \\[3ex]
      \displaystyle
      \frac{y_4 \,  y_5 \,  y_7}{1 + y_4 + y_4 \,  y_5}
    \end{pmatrix}^\top .
  \label{R_on_y}
\end{align}

It should be remarked that
the $\mathsf{R}$-operator~\eqref{R} can be also written as
\begin{equation}
  \label{R_Jones}
  \mathsf{R} =
  s_{2,5} \, s_{3,6} \,
  \mu_2 \, \mu_6 \, \mu_4 \, \mu_2 \, \mu_6 ,
\end{equation}
which can be checked from
\begin{gather*}
  s_{3,5} \, \left( \mu_3 \, \mu_5 \, \mu_4 \right)^3
  = 1 ,
  \\
  s_{2,6} \, \left( \mu_2 \, \mu_6 \, \mu_4 \right)^3 = 1.
\end{gather*}
These identities  correspond to
a (half) periodicity in the cluster algebra associated to
$A_3$-type quiver, which is a sub-quiver of Fig.~\ref{fig:quiverR}.
See, \emph{e.g.},~\cite{FominZelev02a,Nakanishi10}.

\subsection{Braid Relation}

We generalize the quiver in Fig.~\ref{fig:quiverR} to that  in
Fig.~\ref{fig:quiver_generic}.
Therein also given is the triangulated disk,
and an exchange matrix $\mathbf{B}$ 
is given by the rule~\eqref{b_and_arrow}
as  a generalization of~\eqref{B_matrix_7}.

\begin{figure}[tbhp]
    \centering
    \includegraphics[]{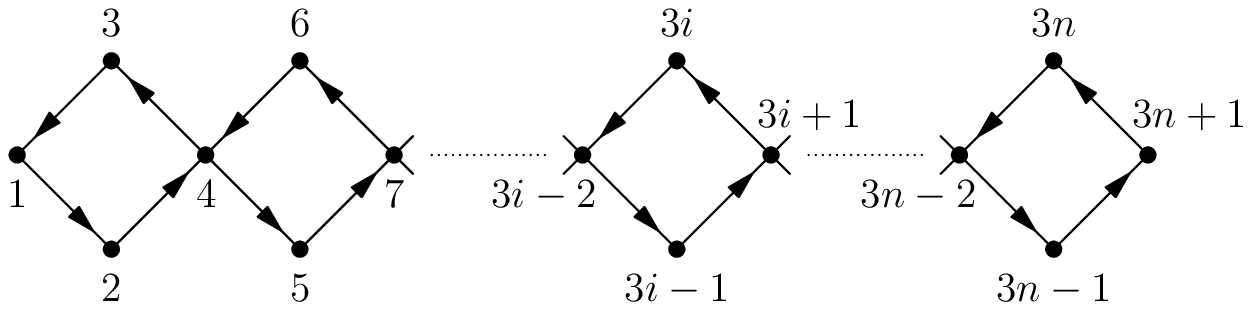}

    \bigskip

    \includegraphics[]{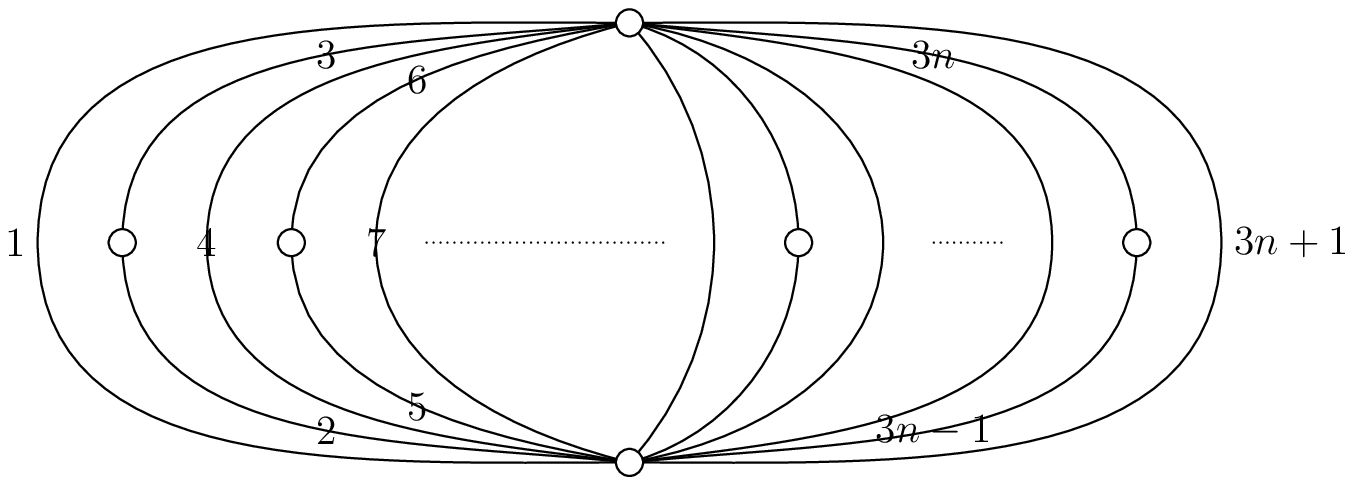}
  \caption{Quiver and triangulated disk.}
  \label{fig:quiver_generic}
\end{figure}

\begin{definition}
As a generalization of~\eqref{R},
we define the $\mathsf{R}$-operator
$\overset{i}{\mathsf{R}}$ for $i=1,\dots, n-1$ associated with the
quiver in Fig.~\ref{fig:quiver_generic} by
\begin{equation}
  \label{i-R}
  \begin{aligned}[b]
    \overset{i}{\mathsf{R}}
    & =
    s_{3i,3i+2} \, s_{3i-1,3i+2} \, s_{3i,3i+3} \,
    \mu_{3i+1} \, \mu_{3i-1} \, \mu_{3i+3} \, \mu_{3i+1} .
  \end{aligned}
\end{equation}
\end{definition}

Note that 
\begin{equation}
    \overset{i\phantom{-1}}{{\mathsf{R}^{\smash{-1}}}}
     =
    s_{3i, 3i+3} \,  s_{3i-1,3i+2} \, s_{3i,3i+2} \,
    \mu_{3i+1} \, \mu_{3i+2} \, \mu_{3i} \, \mu_{3i+1}  .
\end{equation}
The exchange matrix associated to Fig.~\ref{fig:quiver_generic}
is invariant under the action of
the $\mathsf{R}$-operators~$\overset{i\phantom{\pm1}}{{\mathsf{R}^{\smash{\pm 1}}}}$.
The explicit forms of the actions on
the cluster variable
$\boldsymbol{x}=(x_1, x_2, \dots, x_{3n+1})$ and
the $y$-variable
$\boldsymbol{y}=(y_1, y_2,  \dots, y_{3n+1})$
are as follows.
\begin{gather}
  \label{i-R_on_x}
  \overset{i\phantom{\pm1}}{\mathsf{R}^{\smash{\pm 1}}}(\boldsymbol{x})
  =
  \left(
    x_1, \dots, x_{3i-3},
    \mathsf{R}^{\pm 1}(x_{3i-2}, \dots, x_{3i+4}),
    x_{3i+5}, \dots, x_{3n+1}
  \right) ,
  \\[2ex]
  \label{i-R_on_y}
  \overset{i\phantom{\pm1}}{\mathsf{R}^{\smash{\pm 1}}}(\boldsymbol{y})
  =
  \left(
    y_1, \dots, y_{3i-3},
    \mathsf{R}^{\pm 1}(y_{3i-2}, \dots, y_{3i+4}),
    y_{3i+5}, \dots, y_{3n+1}
  \right) ,
\end{gather}
where $\mathsf{R}^{\pm1}(x_1,\dots,x_7)$ and $\mathsf{R}^{\pm1}(y_1,\dots,y_7)$ are
defined in~\eqref{R_on_x} and~\eqref{R_on_y} respectively.

\begin{theorem}
  \label{thm:main}
  The $\mathsf{R}$-operator satisfies the braid relation, namely
  we have
\begin{gather}
  \overset{i}{\mathsf{R}} \, \overset{i+1}{\mathsf{R}} \, \overset{i}{\mathsf{R}}
  =
  \overset{i+1}{\mathsf{R}} \, \overset{i}{\mathsf{R}} \, \overset{i+1}{\mathsf{R}} ,
  \quad
  \text{for $i=1,2,\dots,n-2$,}
  \\[2mm]
  \overset{i}{\mathsf{R}} \, \overset{j}{\mathsf{R}}
  =
  \overset{j}{\mathsf{R}} \, \overset{i}{\mathsf{R}},
  \quad
  \text{for $|i-j|>1$.}
\end{gather}
\end{theorem}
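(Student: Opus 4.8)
The plan is to exploit the \emph{locality} of the operators $\overset{i}{\mathsf{R}}$ together with the translation invariance of the quiver in Fig.~\ref{fig:quiver_generic}, reducing both assertions to two finite checks. By~\eqref{i-R_on_x}, the operator $\overset{i}{\mathsf{R}}$ alters only the seven variables $x_{3i-2},\dots,x_{3i+4}$, and reading off the first and last entries of $\mathsf{R}(\boldsymbol{x})$ in~\eqref{R_on_x} one sees that the two boundary variables $x_{3i-2}$ and $x_{3i+4}$ are in fact left fixed; the same holds for $\overset{i}{\mathsf{R}^{-1}}$. Thus the genuinely \emph{active} window of $\overset{i}{\mathsf{R}}$ is $x_{3i-1},\dots,x_{3i+3}$. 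Since each $\overset{i}{\mathsf{R}^{\pm1}}$ preserves $\mathbf{B}$, any composition does too, so it suffices to verify both identities as equalities of rational maps on the cluster variable $\boldsymbol{x}$ (which are algebraically independent).

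The far commutativity is the easy case. For $|i-j|>1$ the windows $[3i-2,3i+4]$ and $[3j-2,3j+4]$ are either disjoint or, when $|i-j|=2$, meet only in the single index $3i+4=3j-2$, which is a fixed boundary variable for both operators. In either case the active windows $x_{3i-1},\dots,x_{3i+3}$ and $x_{3j-1},\dots,x_{3j+3}$ are disjoint and neither operator reads a variable that the other writes, so the two variable transformations commute; at the level of the mutation words this is just the commutation $\mu_a\mu_b=\mu_b\mu_a$ for $b_{ab}=0$, valid because the mutated vertices of the two operators are non-adjacent in the quiver.

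For the braid relation the windows of $\overset{i}{\mathsf{R}}$ and $\overset{i+1}{\mathsf{R}}$ overlap in the four indices $3i+1,\dots,3i+4$, and their union is $x_{3i-2},\dots,x_{3i+7}$, ten consecutive indices. Because each of the two operators mutates only at vertices interior to this window, and the neighbours of those vertices again lie inside it, the window is closed under the whole triple product, which is therefore a self-map of these ten variables. By the translation invariance of Fig.~\ref{fig:quiver_generic} the sub-quiver carried by these ten vertices is, for every admissible $i$ and every ambient $n$, isomorphic to the full quiver of the rank-$10$ case $n=3$. Hence it is enough to establish the single identity $\overset{1}{\mathsf{R}}\,\overset{2}{\mathsf{R}}\,\overset{1}{\mathsf{R}}=\overset{2}{\mathsf{R}}\,\overset{1}{\mathsf{R}}\,\overset{2}{\mathsf{R}}$ for $n=3$; the general case then follows by shifting all indices by $3(i-1)$.

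The remaining base case is where I expect the real difficulty to lie. The direct route is to compose the explicit maps~\eqref{R_on_x} three times on each side and to check that the two resulting ten-tuples of rational functions in $x_1,\dots,x_{10}$ coincide; this is a finite but heavy calculation, most safely carried out with computer algebra. A more structural route, suggested by the reformulation~\eqref{R_Jones}, is to expand each triple product as a word in the generators $\mu_k$ and $s_{i,j}$ and to convert one word into the other using only the involutivity of mutations, the commutation $\mu_a\mu_b=\mu_b\mu_a$ at non-adjacent vertices, and the half-periodicity $s_{a,b}\,(\mu_a\,\mu_b\,\mu_m)^3=1$ valid for any $A_3$ chain $a\!-\!m\!-\!b$ (the two instances displayed after~\eqref{R_Jones}); this realizes the braid move as a sequence of flips of the triangulated disk representing one and the same mapping class. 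As a guard against transcription errors one can first confirm the corresponding identity on the $y$-variables via~\eqref{R_on_y}, where the subtraction-free formulas are more transparent; but since the monomial passage $\boldsymbol{x}\mapsto\boldsymbol{y}$ of~\eqref{y_from_x} need not be injective, agreement of the $y$-actions is only a consistency check and the definitive verification must be performed on $\boldsymbol{x}$.
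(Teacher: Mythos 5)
Your proposal is correct and follows essentially the same route as the paper: far commutativity is disposed of by disjointness of the active windows (the paper simply calls it trivial), and the braid relation is reduced by locality and translation invariance to the single identity $\overset{1}{\mathsf{R}}\,\overset{2}{\mathsf{R}}\,\overset{1}{\mathsf{R}}=\overset{2}{\mathsf{R}}\,\overset{1}{\mathsf{R}}\,\overset{2}{\mathsf{R}}$ on $(x_1,\dots,x_{10})$ with the $n=3$ exchange matrix, which the paper then settles by exactly the direct computation you propose, displaying the common ten-tuple of rational functions in $x_1,\dots,x_{10}$. Your justification of the reduction and of why the verification must be done on $\boldsymbol{x}$ (with the $y$-action only induced afterwards) is spelled out in more detail than in the paper, but the underlying argument is identical.
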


\begin{proof}
  The second equality is trivial.
  
  It is sufficient to check 
  $\overset{1}{\mathsf{R}} \,\overset{2}{\mathsf{R}}
  \,\overset{1}{\mathsf{R}}
  =
  \overset{2}{\mathsf{R}} \,\overset{1}{\mathsf{R}}
  \,\overset{2}{\mathsf{R}}$
  on the cluster variable  $(x_1,\dots,x_{10})$
  with the exchange matrix associated to Fig.~\ref{fig:quiver_generic}
  with $n=3$.
  By direct computation, 
  we can check that
  both actions,
  $\overset{1}{\mathsf{R}} \,\overset{2}{\mathsf{R}} \,\overset{1}{\mathsf{R}}$
  and
  $\overset{2}{\mathsf{R}} \,\overset{1}{\mathsf{R}} \,\overset{2}{\mathsf{R}}$,
  result
  in the following  same expressions,
  \begin{multline*}
    \Biggl(
    x_1, x_8,
    \frac{
      x_1 x_2 x_4 x_6 x_8 + x_1 x_3 x_5 x_7 x_8 + 
      x_3 x_4 x_5 x_7 x_8 + x_1 x_2 x_6 x_7 x_8 + x_1 x_2 x_4 x_5 x_9
    }{
      x_2 x_4 x_5 x_7} 
    ,
    \\
    \frac{1}{x_2 x_4 x_5 x_7 x_9} \,
    \bigl(
      x_1 x_2 x_4 x_6 x_7 x_8 + 
      x_1 x_3 x_5 x_7^{~2} x_8 + x_3 x_4 x_5 x_7^{~2} x_8 + 
      x_1 x_2 x_6 x_7^{~2} x_8 + x_1 x_2 x_4 x_6 x_8 x_{10}  
      \\
      +
      x_1 x_3 x_5 x_7 x_8 x_{10} + x_3 x_4 x_5 x_7 x_8 x_{10} + 
      x_1 x_2 x_6 x_7 x_8 x_{10} + x_1 x_2 x_4 x_5 x_9 x_{10}
    \bigr),
    \\
    \frac{
      x_6 x_7 x_8 + x_6 x_8 x_{10} + x_5 x_9 x_{10}
    }{x_7 x_9} 
    ,
    \frac{
      x_1 x_3 x_5 + x_3 x_4 x_5 + x_1 x_2 x_6
    }{x_2 x_4}
    ,
    \\
    \frac{1}{x_2 x_4 x_6 x_7 x_9}
    \bigl(
      x_1 x_3 x_4 x_6 x_7 x_8 + 
      x_3 x_4^{~2} x_6 x_7 x_8 + x_1 x_3 x_4 x_6 x_8 x_{10} + 
      x_3 x_4^{~2} x_6 x_8 x_{10} + x_1 x_3 x_4 x_5 x_9 x_{10}
      \\
      + 
      x_3 x_4^{~2} x_5 x_9 x_{10} + x_1 x_3 x_5 x_7 x_9 x_{10} + 
      x_3 x_4 x_5 x_7 x_9 x_{10} + x_1 x_2 x_6 x_7 x_9 x_{10}
    \bigr),
    \\
    \frac{
      x_3 x_4 x_6 x_7 x_8 + x_3 x_4 x_6 x_8 x_{10} + 
      x_3 x_4 x_5 x_9 x_{10} + x_3 x_5 x_7 x_9 x_{10} + 
      x_2 x_6 x_7 x_9 x_{10}
    }{
      x_4 x_6 x_7 x_9
    }     ,
    x_3, x_{10}
    \Biggr) .
  \end{multline*}
  Actions on the $y$-variables
  are induced from these actions.
  This completes the proof.
\end{proof}

The $\mathsf{R}$-operator~\eqref{R} is not new.
In~\cite{RKasha01a}
a solution of the Yang--Baxter equation is constructed from the
quantum dilogarithm function based on a relationship with the
Teichm{\"u}ller theory.
An operator, which has a similar action on the
$y$-variable~\eqref{R_on_y},
was used
in studies of lamination~\cite{Dynnik02a}.
In our case, the braiding denotes
an exchange of the  punctures
on the disk.
Also an operator which has a tropicalized action of cluster
variable~\eqref{R_on_x}
was given in~\cite{DehDynRolWie08}.
See~\cite{FockGonc07b} for applications of Teichm{\"u}ller coordinates
to laminations.

\section{Hyperbolic Geometry}

\subsection{Ideal Tetrahedron}
A building block of hyperbolic 3-manifold is an ideal
tetrahedron whose vertices are on the boundary of a hyperbolic 
3-space~\cite{WPThurs80Lecture}.
An ideal hyperbolic tetrahedron $\triangle$ is parameterized with
cross-ratio $z \in \mathbb{C}$ of its four vertices, 
and the volume of $\triangle$ is given by the Bloch--Wigner function,
\begin{equation}
  D(z)
  =
  \Im \Li(z) + \arg(1-z) \, \log \left| z \right| .
\end{equation}
As  depicted in Fig.~\ref{fig:tetrahedron},
opposite edges have the same dihedral angles.
Therein we have used notations,
\begin{align}
  &z^\prime = 1- \frac{1}{z} ,
  &
  & z^{\prime \prime} = \frac{1}{1-z}  .
\end{align}

\begin{figure}[tbhp]
  \begin{minipage}{0.5\linewidth}
    \centering
    \includegraphics{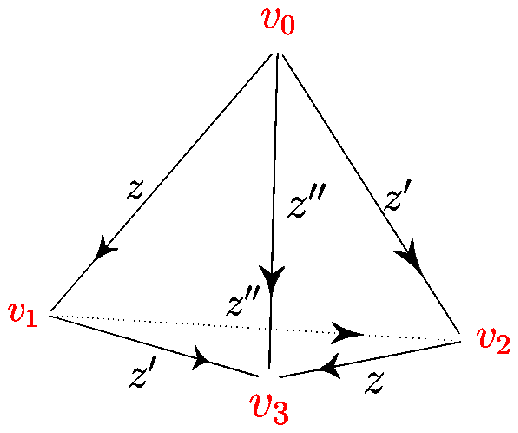}
  \end{minipage}
  \hfill
  \begin{minipage}{0.35\linewidth}
    \centering
    \includegraphics{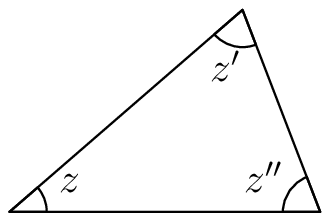}
  \end{minipage}
  \caption{An oriented ideal tetrahedron (left), and a triangle as intersection
    with horosphere (right).}
  \label{fig:tetrahedron}
\end{figure}

In case that a set of ideal tetrahedra $\{ \triangle_{\nu} \}$ is
glued faces together to a hyperbolic manifold 
$M= \bigcup_\nu \triangle_{\nu}$,
the volume of $M$ is given by
\begin{equation}
  \Vol(M) = \sum_\nu D(z_\nu) .
\end{equation}

A complexification of $\Vol(M)$
known as a complex volume is defined 
with the Chern-Simons invariant $\CS(M)$. We have~\cite{WDNeum00a}
\begin{equation}
  \I \left( \Vol(M) + \I \CS(M) \right)
  =
  \sum_\nu \sign(\triangle_\nu) \, 
  \Rogers\left([z_\nu; p_\nu, q_\nu]\right)  ,
\end{equation}
where
$[z_\nu; p_\nu, q_\nu]$
is an element of the extended Bloch group with integers
$p_\nu$ and  $q_\nu$, and $\sign(\triangle_\nu)$ is 
$+1$ (resp. $-1$) when the vertex ordering of $\triangle_\nu$
is same (resp. inverse) with Fig.~\ref{fig:tetrahedron}. 
We have used the extended Rogers dilogarithm function
\begin{equation}
  \label{extended_Rogers}
  \Rogers\left([z;p,q]\right) =
  \Li(z) + \frac{1}{2} \log z \log(1-z)
  + \frac{\pi \, \I}{2} \left(
    q \log z + p \log(1-z)
  \right)
  - \frac{\pi^2}{6} .
\end{equation}

A method to compute
$p_\nu$ and $q_\nu$
was proposed in~\cite{Zicke09}.
For an oriented ideal tetrahedron of modulus $z$ 
in Fig.~\ref{fig:tetrahedron},
let $c_{ab}$ be complex parameters 
on edge connecting vertices~$v_a$ and~$v_b$.
Assume that they fulfill
\begin{align}
  \label{Zickert_c}
  &
  \frac{c_{03} \, c_{12}}{c_{02} \, c_{13}}
  = \pm z,
  &
  &
  \frac{c_{01} \, c_{23}}{c_{03} \, c_{12}}
  =
  \pm \left( 1 - \frac{1}{z} \right),
  &
  &
  \frac{c_{02} \, c_{13}}{c_{01} \, c_{23}}
  =
  \pm \frac{1}{1-z} .
\end{align}
Note that, in gluing tetrahedra together,
identical edges
have the same complex parameter.
Then 
$[z; p, q]$,
integers $p$ and $q$ for modulus $z$,
is given by
\begin{equation}
  \label{compute_pq}
  \begin{aligned}[b]
    \log z + p \, \pi \, \I 
    & =
    \log c_{03} + \log c_{12} - \log c_{02} - \log c_{13} ,
    \\
    -\log(1-z) +  q \, \pi \, \I 
    & =
    \log c_{02} + \log c_{13} - \log c_{01} - \log c_{23} .
  \end{aligned}
\end{equation}
Here and hereafter we mean the principal branch in the logarithm.
In~\cite{Zicke09}  these edge parameters $c_{ab}$ are read from a
developing map.

\subsection{Octahedron}
In our previous paper~\cite{HikamiRInoue12a}, we 
demonstrated that the cluster mutation can be regarded as an
attachment of ideal tetrahedron to triangulated surface
(see also~\cite{NagaTeraYama11a}).
Furthermore
we claimed that
the cluster variable  $\boldsymbol{x}$ corresponds to Zickert's complex
parameters $c_{ab}$ on edges
(see~\cite[\S 2.3]{HikamiRInoue12a} for detail).
Roughly speaking, this is due to that all mutations used in 
$\mathsf{R}^{\pm 1}(\boldsymbol{x})$
have a form of the Ptolemy relation,
$a \,c + b\, d = e\, f$, which is same with~\eqref{Zickert_c}.

\begin{figure}[tbhp]
  \begin{minipage}{0.47\linewidth}
    \centering
    \includegraphics{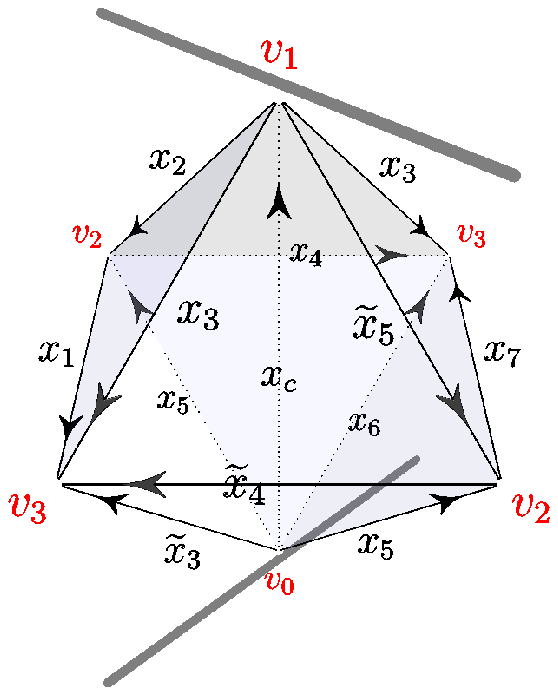}
  \end{minipage}
  \hfill
  \begin{minipage}{0.47\linewidth}
    \centering
    \includegraphics{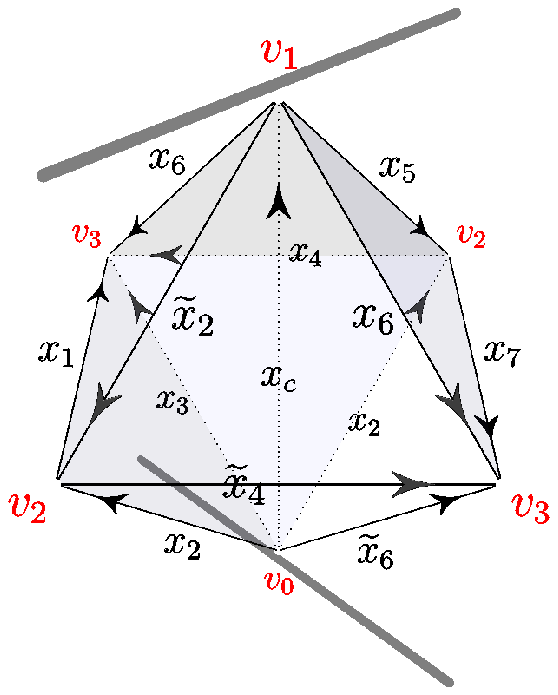}
  \end{minipage}
  \caption{Octahedron
  for $\overset{1}{\mathsf{R}}$ (left) and
  $\overset{1\hspace{12pt}}{\mathsf{R}^{-1}}$ (right)}
\label{fig:octahedron}
\end{figure}


For brevity,
we study a case
\begin{equation*}
  \widetilde{\boldsymbol{x}} =
  \overset{1\phantom{\pm 1}}{\mathsf{R}^{\pm 1}} ( \boldsymbol{x} ) .
\end{equation*}
Based on the  observation in~\cite{HikamiRInoue12a}, we see that the $\mathsf{R}$-operator~\eqref{R}
is realized as an octahedron in Fig.~\ref{fig:octahedron}, which is
composed of four tetrahedra
$\{\triangle_N, \triangle_S, \triangle_W, \triangle_E \}$.
See Fig.~\ref{fig:crossing_angle} for a top view.
The four tetrahedra
originate from
four mutations in the $\mathsf{R}^{\pm 1}$-operator,~\eqref{R}
and~\eqref{Rinv};
$\mu_2$ and $\mu_6$ in~\eqref{R} respectively correspond to $\triangle_W$ and
$\triangle_E$,
and two $\mu_4$'s are for $\triangle_N$ and $\triangle_S$.
The cluster variables $x_k$ and $\widetilde{x}_k$ are assigned to
edges of the octahedra,
and we have used
\begin{align}
  x_c & = 
  \frac{x_2 \, x_6 + x_3 \, x_5}{x_4} .
\end{align}
Note that
we have fixed vertex ordering  for our convention, and
that
edges with the same complex parameters
(\emph{e.g.}, two pairs of edges $v_0$--$v_2$, $v_1$--$v_3$)
are identical.

\begin{figure}[tbhp]
  \centering
    \begin{minipage}{0.47\linewidth}
    \centering
    \includegraphics{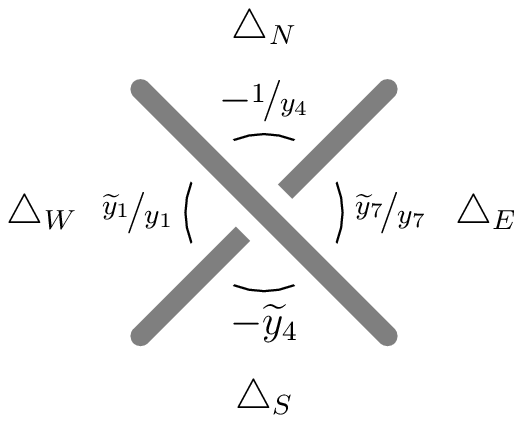}
  \end{minipage}
  \hfill
  \begin{minipage}{0.47\linewidth}
    \centering
    \includegraphics{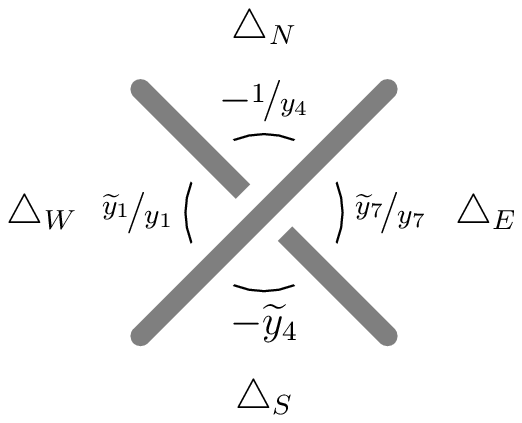}
  \end{minipage}
  \caption{Dihedral angle at crossings,
  $\overset{1}{\mathsf{R}}$ (left) and
  $\overset{1\hspace{12pt}}{\mathsf{R}^{-1}}$ (right).}
  \label{fig:crossing_angle}
\end{figure}

As the $\mathsf{R}$-operator satisfies the braid relation (Theorem~\ref{thm:main}),
we can interpret that each  octahedron is assigned to every
crossing of knot diagram as in Fig.~\ref{fig:crossing_angle}.
This 
reminds a fact~\cite{DThurs99a}
that
octahedron was assigned to the 
Kashaev $\mathsf{R}$-matrix~\cite{Kasha95}
(see also, \cite{Hikam00d,
ChoKimKim13a,ChoMurYok09a,YYokota11a}).
Note that another expression~\eqref{R_Jones} of the same $\mathsf{R}$-operator
corresponds to a 
decomposition of octahedron into five tetrahedra,
which was used in studies of the colored Jones
$\mathsf{R}$-matrix at root of unity~\cite{DThurs99a,JChoMura10a}.

\begin{table}[tbhp]
  \newcolumntype{L}{>{$}l<{$}}
  \newcolumntype{R}{>{$\displaystyle}r<{$}}
  \newcolumntype{C}{>{$\displaystyle}c<{$}}
  \rowcolors{2}{gray!11}{}
  \centering
  \begin{tabular}{CCCC*{2}{R}CC*{2}{R}}
    \toprule
    \addlinespace[3pt]
    &&& \multicolumn{3}{C}{\overset{1}{\mathsf{R}}} &&
    \multicolumn{3}{C}{\overset{1\phantom{-1}}{\mathsf{R}^{-1}}} 
    \\
    \addlinespace[3pt]
    \cmidrule{4-6} \cmidrule{8-10}
    \triangle & \text{Volume} &
    \phantom{AA}&
    \sign(\triangle) &
    z_\triangle  \phantom{A} &  \frac{1}{1-z_\triangle} 
    &\phantom{AA}&
    \sign(\triangle) &
    z_\triangle  \phantom{A}&  \frac{1}{1-z_\triangle} 
    \\
    \midrule
    \addlinespace[3pt]
    \triangle_N &
    D
    \left(
      -  \frac{1}{y_4}
    \right)
    &&
    - &
    -\frac{x_2 \, x_6}{x_3 \, x_5} &
    \frac{x_3 \, x_5}{x_4 \, x_c} 
    &&
    + &
    - \frac{x_3 \, x_5}{x_2 \, x_6} &
    \frac{x_2 \, x_6}{x_4 \, x_c}
    \\ 
    \addlinespace[3pt]
    \triangle_S &
    D
    \left(
      -\widetilde{y}_4
    \right)
    &&
    - &
    -\frac{\widetilde{x}_3 \, \widetilde{x}_5}{x_3 \, x_5}
    &
    \frac{{x}_3 \, {x}_5}{
      \widetilde{x}_4 \,   {x}_c}
    &&
    + &
    - \frac{\widetilde{x}_2 \, \widetilde{x}_6}{x_2 \, x_6}
    &
    \frac{      x_2 \, {x}_6}{x_c \, \widetilde{x}_4}
    \\
    \addlinespace[3pt]
    \triangle_W &
    D
    \left(
      \frac{\widetilde{y}_1}{y_1}
    \right)
    &&
    + &
    \frac{x_2 \, \widetilde{x}_3}{x_3 \, x_5}
    &
    -\frac{x_3 \, x_5}{x_1 \, x_c}
    &&
    - &
    \frac{\widetilde{x}_2 \, x_3}{x_2 \, x_6}
    &
    -\frac{{x}_2 \, x_6}{x_1 \, x_c}
    \\
    \addlinespace[3pt]
    \triangle_E &
    D
    \left(
      \frac{\widetilde{y}_7}{y_7}
    \right)
    &&
    + &
    \frac{\widetilde{x}_5 \, {x}_6}{x_3 \, x_5}
    &
    -\frac{x_3 \, x_5}{x_c \, x_7}
    &&
    - &
    \frac{x_5 \, \widetilde{x}_6}{x_2 \, x_6}
    &
    -\frac{x_2 \, {x}_6}{{x}_c \, {x}_7}
    \\
    \bottomrule
  \end{tabular}
  \bigskip
  \caption{Moduli of four tetrahedra assigned to 
    operators~$\overset{1}{\mathsf{R}}$ and
    $\overset{1\hspace{12pt}}{\mathsf{R}^{-1}}$.
    Sgn ``$+$'' 
  (resp. ``$-$'')
  means that vertex ordering of tetrahedron is
  same
  (resp. inverse)
  with Fig.~\ref{fig:tetrahedron}.}
  \label{tab:modulus_R}
\end{table}

Taking into account  of the vertex ordering of tetrahedra, we can 
determine moduli of each tetrahedron
from~\eqref{Zickert_c} as in Table~\ref{tab:modulus_R}.
From these results,
we define dilogarithm functions for every crossing by
\begin{equation}
  \label{eq:3}
  \Rogers([\overset{1\phantom{\pm 1}}{\mathsf{R}^{\pm 1}}];
  \boldsymbol{x})
  =
  \sum_{t \in \{ N,S,W,E \}}
  \sign(\triangle_t) \,
  \Rogers \left(
    [z_{\triangle_t} ; p_{\triangle_t} , q_{\triangle_t} ]
  \right) .
\end{equation}
Here we have used the extended Rogers dilogarithm~\eqref{extended_Rogers}, and
integers $p_{\triangle_t}$ and $q_{\triangle_t}$ are given
from~\eqref{compute_pq} by use of Table~\ref{tab:modulus_R}.
For instance,
$p_{\triangle_E}$ and $q_{\triangle_E}$
in the operator~$\overset{1}{\mathsf{R}}$
are given as
\begin{equation*}
  \begin{aligned}
    p_{\triangle_E} \, \pi \, \I
    &=
    - \log \left(
      \frac{
        \widetilde{x}_5 \, x_6}{x_3 \, x_5}
    \right)
    + \log({\widetilde{x}_5}) + \log (x_6) - \log(x_3) - \log(x_5),
    \\[1ex]
    q_{\triangle_E} \, \pi \, \I
    & =
    - \log \left(
      - \frac{x_3 \, x_5}{x_c \, x_7}
    \right)
    +
    \log(x_3) + \log(x_5) - \log(x_c) - \log(x_7) .
  \end{aligned}
\end{equation*}

It should be remarked that,
to identify the $\mathsf{R}$-operator with a hyperbolic octahedron,
we need a consistency condition around a central edge labeled by
$x_c$ in Fig.~\ref{fig:octahedron}.
This condition is automatically satisfied by
\begin{equation*}
  y_1 \, y_4 \, y_7 =
  \widetilde{y}_1 \,  \widetilde{y}_4 \,  \widetilde{y}_7 ,
\end{equation*}
where
$\widetilde{\boldsymbol{y}} 
  = \overset{1\phantom{\pm 1}}{\mathsf{R}^{\pm
  1}}(\boldsymbol{y})$~\eqref{i-R_on_y}.
In Fig.~\ref{fig:crossing_angle} 
denoted are
dihedral angles around central axis assigned to each crossing.

The $i$-th braiding operator
$\overset{i\phantom{\pm 1}}{\mathsf{R}^{\pm 1}}$
in~\eqref{i-R} can be interpreted in
the same manner.
As we have the cluster mutation
$\widetilde{\boldsymbol{x}}
=
\overset{i\phantom{\pm 1}}{\mathsf{R}^{\pm 1}}(\boldsymbol{x})
$
as in~\eqref{i-R_on_x},
the edge parameters 
$(x_1, x_2, \dots , x_7)$
and
$(\widetilde{x}_1, \widetilde{x}_2, \dots , \widetilde{x}_7)$
in Fig.~\ref{fig:octahedron}
are
replaced respectively
by $(x_{3i-2}, x_{3i-1}, \dots, x_{3i+4})$
and
$(\widetilde{x}_{3i-2}, \widetilde{x}_{3i-1}, \dots, \widetilde{x}_{3i+4})$.
The moduli of the tetrahedra  in
Table~\ref{tab:modulus_R} should be replaced correspondingly,
and 
as a result
we have
the dilogarithm function
$  \Rogers([\overset{i\phantom{\pm 1}}{\mathsf{R}^{\pm 1}}];
  \boldsymbol{x})
$ as in~\eqref{eq:3} by replacing $x_a$ with ${x}_{3i+a-3}$.

\subsection{Braid Group Presentation and Gluing Conditions}

Our main claim is the following.
\begin{theorem}
  \label{thm:main2}
  Let a knot $K$ have a braid group presentation
  $\sigma_{k_1}^{~\varepsilon_1} \,
  \sigma_{k_2}^{~\varepsilon_2} \cdots \sigma_{k_m}^{~\varepsilon_m}$,
  where $\varepsilon_j=\pm 1$ and
  \begin{equation*}
    \mathcal{B}_n=
    \left\langle
    \sigma_1, \sigma_2,  \dots, \sigma_{n-1}
    ~\middle|~
    \begin{array}{c}
      \text{$
        \sigma_i \sigma_j = \sigma_j \sigma_i$
        for $|i-j|>1$}
      \\
      \text{$
        \sigma_i \sigma_{i+1} \sigma_i=
        \sigma_{i+1} \sigma_{i} \sigma_{i+1}
        $ for $i=1,2,\dots,n-2$}
    \end{array}
    \right\rangle .
  \end{equation*}
  We define a cluster pattern
  for
  $\boldsymbol{x}[j]=
  \left(
    x[j]_1, \dots, x[j]_{3n+1} \right)$
  by
  \begin{equation}
    \boldsymbol{x}[1]
    \xrightarrow{\overset{k_1\phantom{\varepsilon}
      }{\mathsf{R}^{\smash{\varepsilon_1}}}}
    \boldsymbol{x}[2]
    \xrightarrow{\overset{k_2\phantom{\varepsilon}
      }{\mathsf{R}^{\smash{\varepsilon_2}}}}
    \cdots
    \xrightarrow{\overset{k_m\phantom{\varepsilon}
      }{\mathsf{R}^{\smash{\varepsilon_m}}}}
    \boldsymbol{x}[m+1] ,
  \end{equation}
  with the exchange matrix associated to
  Fig.~\ref{fig:quiver_generic}.
  We assume that the initial cluster variable $\boldsymbol{x}[1]$ 
  satisfies
  \begin{equation}
    \label{periodic_x}
    \boldsymbol{x}[1]= \boldsymbol{x}[m+1] .
  \end{equation}
  Then the $y$-variables, $y[k]_i \in \mathbb{C}$, 
  induced from the cluster pattern
  fulfill algebraic equations for shape parameters of 
  ideal tetrahedra in the triangulation of 
  $S^3 \setminus (K \cup \text{$2$-points})$. 
\end{theorem}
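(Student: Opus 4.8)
The plan is to exhibit the explicit ideal triangulation $\mathcal{T}$ of $S^3\setminus(K\cup\text{2-points})$ obtained by stacking one octahedron per crossing along the braid word, and then to show that the asserted algebraic equations for the shape parameters are exactly Thurston's edge-gluing equations, each of which collapses to an identity among the $y$-variables. First I would fix the triangulation: to the $j$-th letter $\sigma_{k_j}^{\varepsilon_j}$ I attach the octahedron realizing $\overset{k_j\phantom{\varepsilon}}{\mathsf{R}^{\smash{\varepsilon_j}}}$, decomposed into the four tetrahedra $\{\triangle_N,\triangle_S,\triangle_W,\triangle_E\}$ whose moduli and signs are read from Table~\ref{tab:modulus_R} (Fig.~\ref{fig:octahedron}). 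Because the $\mathsf{R}$-operator satisfies the braid relation (Theorem~\ref{thm:main}), the way in which consecutive octahedra are glued is compatible with the braid group presentation, so the assembled complex is well defined; the two extra marked points arise from capping the top and the bottom of the braid.

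Next I would classify the edges of $\mathcal{T}$ into three families and verify the edge equation $\prod z^{*}_\nu=1$ for each. For a \emph{central} edge, one per octahedron and labelled $x_c$ in Fig.~\ref{fig:octahedron}, the four dihedral contributions of $\triangle_N,\triangle_S,\triangle_W,\triangle_E$ multiply, via the entries of Table~\ref{tab:modulus_R} and the moduli $-1/y_4$, $-\widetilde{y}_4$, $\widetilde{y}_1/y_1$, $\widetilde{y}_7/y_7$, to the relation already recorded as $y_1\,y_4\,y_7=\widetilde{y}_1\,\widetilde{y}_4\,\widetilde{y}_7$, which is an automatic consequence of the $y$-mutation~\eqref{R_on_y}. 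For an edge \emph{interior} to the stack, shared by two consecutive octahedra, there is nothing to match: the single global cluster pattern $\boldsymbol{x}[1]\to\cdots\to\boldsymbol{x}[m+1]$ assigns one $y$-variable to that edge through~\eqref{y_from_x} and~\eqref{i-R_on_y}, so the shape parameters meeting along it are consistent by construction.

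The role of the periodicity hypothesis~\eqref{periodic_x} is to handle the remaining \emph{seam} edges created when the braid is closed. The identification $\boldsymbol{x}[1]=\boldsymbol{x}[m+1]$ forces the edge parameters $c_{ab}$ of~\eqref{Zickert_c} on the top and bottom faces to agree, whence each seam edge equation becomes the statement that a product of $y$-variables around the seam equals $1$; this holds precisely because the corresponding monomials~\eqref{y_from_x} coincide under~\eqref{periodic_x}. The two deleted points appear only as additional vertices of $\mathcal{T}$ whose links are triangulated by faces of the octahedra, so their conditions are vertex (cusp) conditions rather than new edge equations and are subsumed by the incidence data already used, with the integers $p_{\triangle_t},q_{\triangle_t}$ tracked through~\eqref{compute_pq}.

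The main obstacle I anticipate is the bookkeeping of the edge combinatorics of $\mathcal{T}$ — pinning down, for every edge, which tetrahedra surround it and with which of the three angle types $z$, $z'$, $z''$ each contributes — especially along the braid axis and near the two marked points, where several octahedra abut. Once this incidence data is fixed, each edge equation reduces, through Table~\ref{tab:modulus_R} and the mutation formulas~\eqref{R_on_y}--\eqref{i-R_on_y}, either to the central-edge identity $y_1\,y_4\,y_7=\widetilde{y}_1\,\widetilde{y}_4\,\widetilde{y}_7$, to the tautological matching on interior edges, or to a direct consequence of~\eqref{periodic_x}; the residual algebra is then routine.
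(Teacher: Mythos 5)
Your overall architecture matches the paper's: one octahedron of four tetrahedra per crossing, the central-edge condition disposed of by $y_1\,y_4\,y_7=\widetilde{y}_1\,\widetilde{y}_4\,\widetilde{y}_7$, and the closure handled by~\eqref{periodic_x}. But there is a genuine gap at the step you dismiss as tautological. For an edge shared between two consecutive octahedra you write that ``there is nothing to match'' because the cluster pattern assigns a single $y$-variable to it. That conflates two different things: the fact that identified edges carry the same complex parameter $x[j]_a$ (which is indeed automatic from~\eqref{i-R_on_x}) and the gluing equation itself, which says that the product of the dihedral shape parameters (the appropriate choices among $z$, $z'$, $z''$ from Table~\ref{tab:modulus_R}) of \emph{all} tetrahedra incident to that edge equals $1$. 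The latter is a nontrivial identity in the $y$-variables, and it is precisely the content of the paper's proof: one must distinguish the four sign patterns of consecutive crossings ($\mathsf{R}\mathsf{R}$, $\mathsf{R}^{-1}\mathsf{R}^{-1}$, $\mathsf{R}\mathsf{R}^{-1}$, $\mathsf{R}^{-1}\mathsf{R}$) and verify, using the explicit mutation formulas~\eqref{R_on_y}, identities such as
\begin{equation*}
  \frac{1}{1-\frac{y[2]_7}{y[1]_7}}\cdot\Bigl(1+\tfrac{1}{y[2]_4}\Bigr)\cdot\bigl(1+y[2]_7\bigr)\cdot\frac{1}{1-\frac{y[3]_4}{y[2]_4}}=1 .
\end{equation*}
Your proposal never produces or verifies these products; asserting consistency ``by construction'' would only be legitimate if you invoked (and proved in this setting) a general principle that Ptolemy-type edge coordinates satisfying~\eqref{Zickert_c} automatically solve the gluing equations, which you do not.

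A second, related misstep: you attribute the remaining conditions to the seam created by closing the braid, resolved by~\eqref{periodic_x}. In the paper the completeness conditions arise at \emph{alternating} crossings inside the braid (the $\mathsf{R}\mathsf{R}^{-1}$ and $\mathsf{R}^{-1}\mathsf{R}$ cases), where the relevant product reduces to $y[1]_{3i-1}\,y[1]_{3i}$, and this equals $1$ because of the definition~\eqref{y_from_x} of the $y$-variables together with the shape of the exchange matrix --- not because of the periodicity assumption. So the case analysis and the explicit cancellations are the real work of the proof, and your write-up leaves exactly that work undone.
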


We note that the periodicity~\eqref{periodic_x} denotes a closure of
the braid,
and that
the $2$-points are
$v_2$ and $v_3$ in Fig.~\ref{fig:octahedron}.

In the above theorem, we do not assume that a knot~$K$
is hyperbolic.
We study a triangulation induced from
a braid group presentation.
This situation is same with the volume conjecture~\cite{Kasha96b},
which suggests an intimate relationship between
a complex volume of $S^3\setminus K$
and
the Kashaev invariant for $K$ defined from a quantum $R$-matrix.

Our triangulation is a standard one used
in
\texttt{SnapPy}~\cite{Weeks05},
and the Neumann--Zagier potential function was
constructed in~\cite{ChoMurYok09a} from such triangulation.
See also \cite{InoueKabay14a}, where a complex volume is studied from
the same triangulation by use of quandle.
So it is natural to expect that
for hyperbolic knot~$K$
there exists a \emph{geometric solution}
of~\eqref{periodic_x},
where
the neighbors of additional two points cancel and
we endow
a complete hyperbolic structure for
$S^3 \setminus K$.
We show in the next section
numerical results for some knots, and we discuss
how the cancellation of two balls occurs
(see Prop.~\ref{prop:cancel}).
Unfortunately, at this stage, we do not know how to extract
generally  such a
preferable
solution from~\eqref{periodic_x}.
Due to that the geometric content of each octahedron is identified as
in Table~\ref{tab:modulus_R},
we obtain complex volume as
follows
if we assume an existence of
geometric solution.


\begin{conj}
  \label{conj}
  There exists  an algebraic solution of~\eqref{periodic_x} such
  that
  the complex volume of  $K$ is given
  by 
  \begin{equation}
    \label{complex_from_R}
    \I \left(
      \Vol(S^3 \setminus K)
      + \I \CS(S^3 \setminus K)
    \right)
    =
    \sum_{j=1}^m
    \Rogers(
    [ \overset{k_j \phantom{\varepsilon}}{\mathsf{R}^{\varepsilon_j}}
    ];
    \boldsymbol{x}[j]) .
  \end{equation}
  See~\eqref{eq:3}  and the end of the last subsection for
  the definition of
  the dilogarithm
  function
  $
    \Rogers(
    [ \overset{k_j \phantom{\varepsilon}}{\mathsf{R}^{\varepsilon_j}}
    ];
    \boldsymbol{x}[j]) 
  $.
\end{conj}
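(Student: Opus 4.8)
The plan is to deduce the identity \eqref{complex_from_R} from Neumann's description of the complex volume via the extended Bloch group \cite{WDNeum00a}, using the cluster pattern to supply both the shape parameters and the flattening data, and to single out the geometric solution at which the two auxiliary punctures disappear. First I would invoke Theorem~\ref{thm:main2}: the periodicity \eqref{periodic_x} guarantees that the $y$-variables induced from the cluster pattern solve the gluing (edge) equations of the triangulation of $M=S^3\setminus(K\cup\text{$2$-points})$. The braid relation of Theorem~\ref{thm:main} ensures that the octahedra assigned to successive crossings glue consistently, and the consistency condition $y_1\,y_4\,y_7=\widetilde{y}_1\,\widetilde{y}_4\,\widetilde{y}_7$ around each central edge guarantees that the four tetrahedra of every octahedron fit together at the axis labelled $x_c$. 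Thus the triangulation together with the moduli of Table~\ref{tab:modulus_R} is a genuine solution of Thurston's gluing equations.

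Next I would produce the flattening $[z_\nu;p_\nu,q_\nu]$ required by Neumann's formula. Here the key input is the identification, recalled from \cite{HikamiRInoue12a}, of the cluster variables $x_k$ with Zickert's edge parameters $c_{ab}$: every mutation appearing in $\mathsf{R}^{\pm1}(\boldsymbol{x})$ has the Ptolemy form matching \eqref{Zickert_c}, so the integers $p_\nu,q_\nu$ for each tetrahedron are read off directly from \eqref{compute_pq}, exactly as in the worked computation of $p_{\triangle_E}$ and $q_{\triangle_E}$. With this flattening in hand, Neumann's theorem gives
\begin{equation*}
  \I\left(\Vol(M)+\I\CS(M)\right)
  =\sum_\nu \sign(\triangle_\nu)\,\Rogers\left([z_\nu;p_\nu,q_\nu]\right) .
\end{equation*}
Regrouping the sum octahedron by octahedron, that is crossing by crossing, and using the definition \eqref{eq:3} of $\Rogers([\overset{k_j}{\mathsf{R}^{\varepsilon_j}}];\boldsymbol{x}[j])$, the right-hand side is precisely $\sum_{j=1}^m \Rogers([\overset{k_j}{\mathsf{R}^{\varepsilon_j}}];\boldsymbol{x}[j])$, which is the asserted formula for $M$.

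The hard part, and the reason the statement is posed only as a conjecture, is the passage from $M=S^3\setminus(K\cup\text{$2$-points})$ to $S^3\setminus K$. I would need to exhibit the distinguished \emph{geometric} solution of \eqref{periodic_x} at which the neighborhoods of the two extra punctures $v_2,v_3$ cancel, so that the glued manifold carries the complete hyperbolic structure on $S^3\setminus K$; the cancellation phenomenon is illustrated numerically in Proposition~\ref{prop:cancel}. Concretely this amounts to imposing the cusp and completeness conditions at $v_2$ and $v_3$ and showing that the resulting system is compatible with the edge equations already satisfied, so that the contributions of the two collapsing balls drop out of the dilogarithm sum. Since our triangulation is the standard one used by \texttt{SnapPy} and analysed via the Neumann--Zagier potential in \cite{ChoMurYok09a}, I would compare the flattened cluster solution with the complete structure produced there and argue that the two agree. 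Establishing the \emph{existence} of such a solution in general, rather than verifying it case by case as in the examples, is the obstacle I do not expect to surmount with the present cluster-algebraic machinery alone.
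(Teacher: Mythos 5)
The statement you are asked to prove is stated in the paper as a \emph{conjecture}, and the paper itself offers no proof of it: the authors explicitly write that they ``do not know how to extract generally such a preferable solution from~\eqref{periodic_x}'', and they support \eqref{complex_from_R} only by the geometric identification of each $\mathsf{R}^{\pm1}$ with a flattened octahedron (Table~\ref{tab:modulus_R}, \eqref{eq:3}), by Proposition~\ref{prop:cancel} (which handles the cancellation of the two auxiliary balls via the pillow construction, given the singular initial data \eqref{cancel_x}), and by numerical checks for $4_1$ and $3_1$. Your proposal reconstructs exactly this chain of reasoning --- gluing equations from Theorem~\ref{thm:main2}, flattenings from the identification of cluster variables with Zickert's edge parameters via \eqref{Zickert_c}--\eqref{compute_pq}, Neumann's extended-Bloch-group formula, regrouping crossing by crossing --- and you are candid that the existence of the distinguished solution is the step you cannot supply. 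So your text is a correct account of the paper's heuristic, not a proof; it could not be otherwise.

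Two points where the remaining gap is slightly larger than you state. First, the cancellation of the two extra points is not merely ``illustrated numerically'': Proposition~\ref{prop:cancel} proves that \emph{if} an algebraic solution of \eqref{periodic_x} satisfies the degeneration \eqref{cancel_x}, then inserting the two pillows (whose Rogers contributions cancel by opposite orientation) yields a triangulation of $S^3\setminus K$ without changing the dilogarithm sum; what is missing is the existence of such a solution for a general braid presentation, together with the statement that it realizes the complete (or, for non-hyperbolic $K$ such as $3_1$, the appropriate $PSL(2,\mathbb{C})$) structure. Second, before Neumann's or Zickert's theorem can be applied you must know that the integers $p_\nu,q_\nu$ read off from \eqref{compute_pq} assemble into a genuine combinatorial flattening, i.e.\ that the cluster $x$-variables at the chosen solution actually arise as edge parameters of a decoration of a boundary-parabolic representation; the Ptolemy form of the mutations makes this plausible but it is not verified in general in the paper, and your argument assumes it silently. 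Naming these two gaps is the honest conclusion; closing them is precisely what would upgrade the conjecture to a theorem.
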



\begin{figure}[tbhp]
  \begin{minipage}{0.7\linewidth}
    \centering
    \includegraphics[scale=0.8]{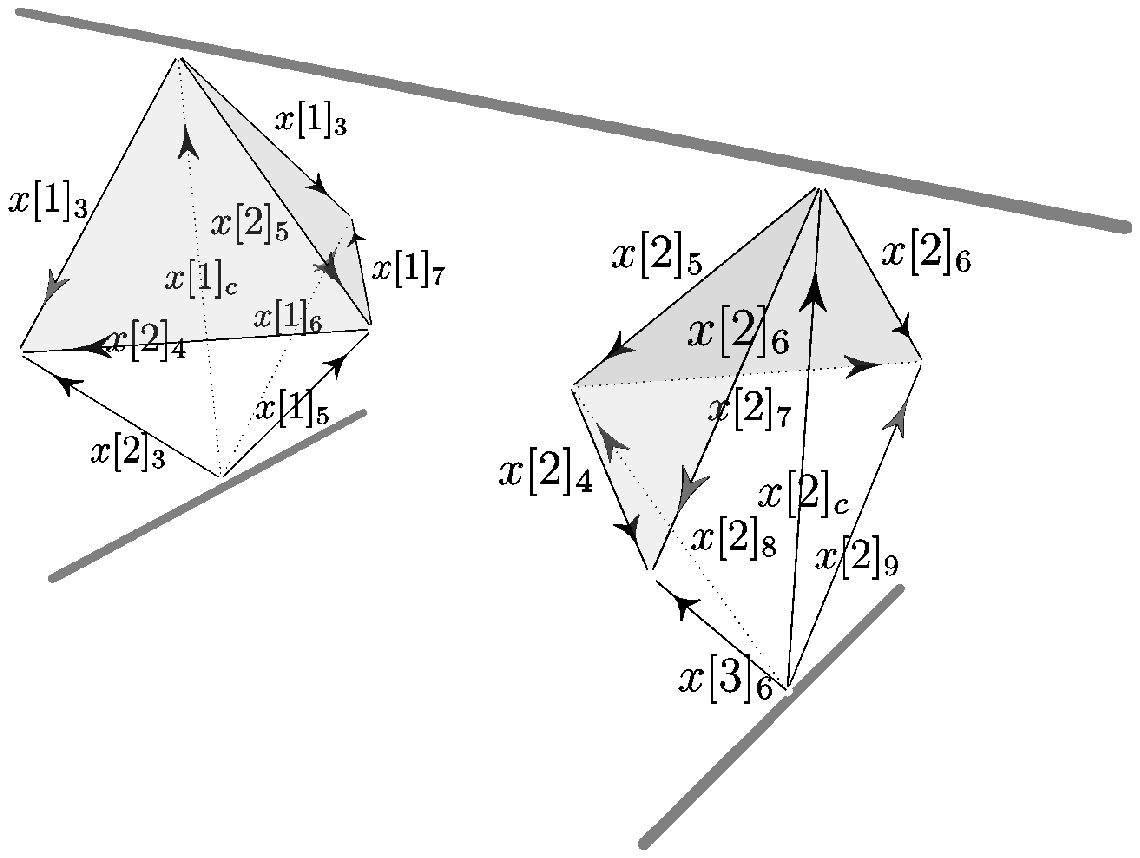}
  \end{minipage}
  \hfill
  \begin{minipage}{0.28\linewidth}
    \centering
    \includegraphics[scale=0.7]{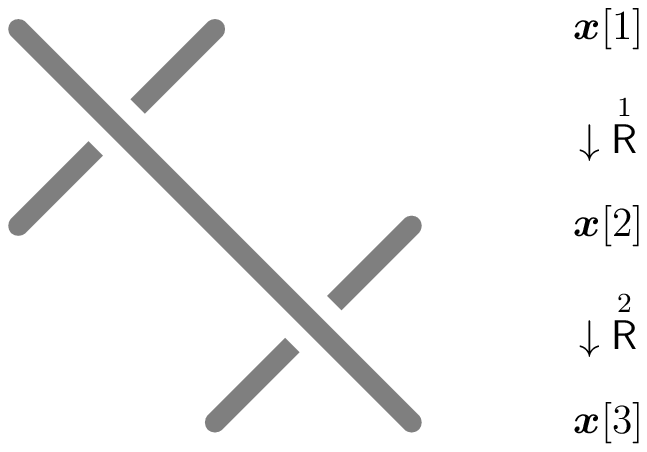}

    \bigskip    \bigskip

    \includegraphics[scale=0.8]{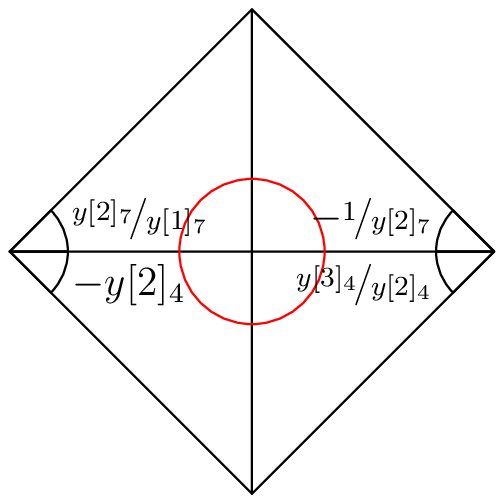}
  \end{minipage}
  \caption{Gluing of octahedra (left) assigned to
    crossing (right    top),
    and a developing map (right bottom).
    A consistency condition is read from a red circle.
  }
  \label{fig:glue1}
\end{figure}


\begin{proof}[Proof of Theorem~\ref{thm:main2}] 
We need to check consistency conditions and completeness conditions
as~\cite{Hikam00d,ChoKimKim13a}.
We have already seen that
a consistency condition around a central axis
of octahedra is fulfilled.
We shall check other cases.
First we study a cluster pattern
\begin{equation*}
  \boldsymbol{x}[1]  \xrightarrow[]{\overset{1}{\mathsf{R}}}
  \boldsymbol{x}[2]  \xrightarrow[]{\overset{2}{\mathsf{R}}}
  \boldsymbol{x}[3] .
\end{equation*}
For each crossing we assign octahedra as in Fig.~\ref{fig:glue1}.
Therein colored faces are glued together so that 
identical edges have the same complex parameters.
Note 
that~\eqref{R_on_x} implies
$x[2]_6=x[1]_3$ and $x[1]_7=x[2]_7$, and that
\begin{align*}
  &x[1]_c =
  \frac{
    x[1]_2 \, x[1]_6 + x[1]_3 \, x[1]_5}{
    x[1]_4},
  &
  &
  x[2]_c =
  \frac{
    x[2]_5 \, x[2]_9 + x[2]_6 \, x[2]_8}{
    x[2]_7} .
\end{align*}
Consistency condition around edge labeled by complex parameter
$x[2]_5$ is checked as
\begin{equation*}
  \frac{1}{
    1- \frac{y[2]_7}{y[1]_7}
  } \cdot
  \left( 1+ \frac{1}{y[2]_4} \right) \cdot
  \left( 1+ y[2]_7 \right) \cdot
  \frac{1}{
    1- \frac{y[3]_4}{y[2]_4}
  } = 1.
\end{equation*}

\begin{figure}[tbhp]
  \begin{minipage}{0.7\linewidth}
    \centering
    \includegraphics[scale=0.8]{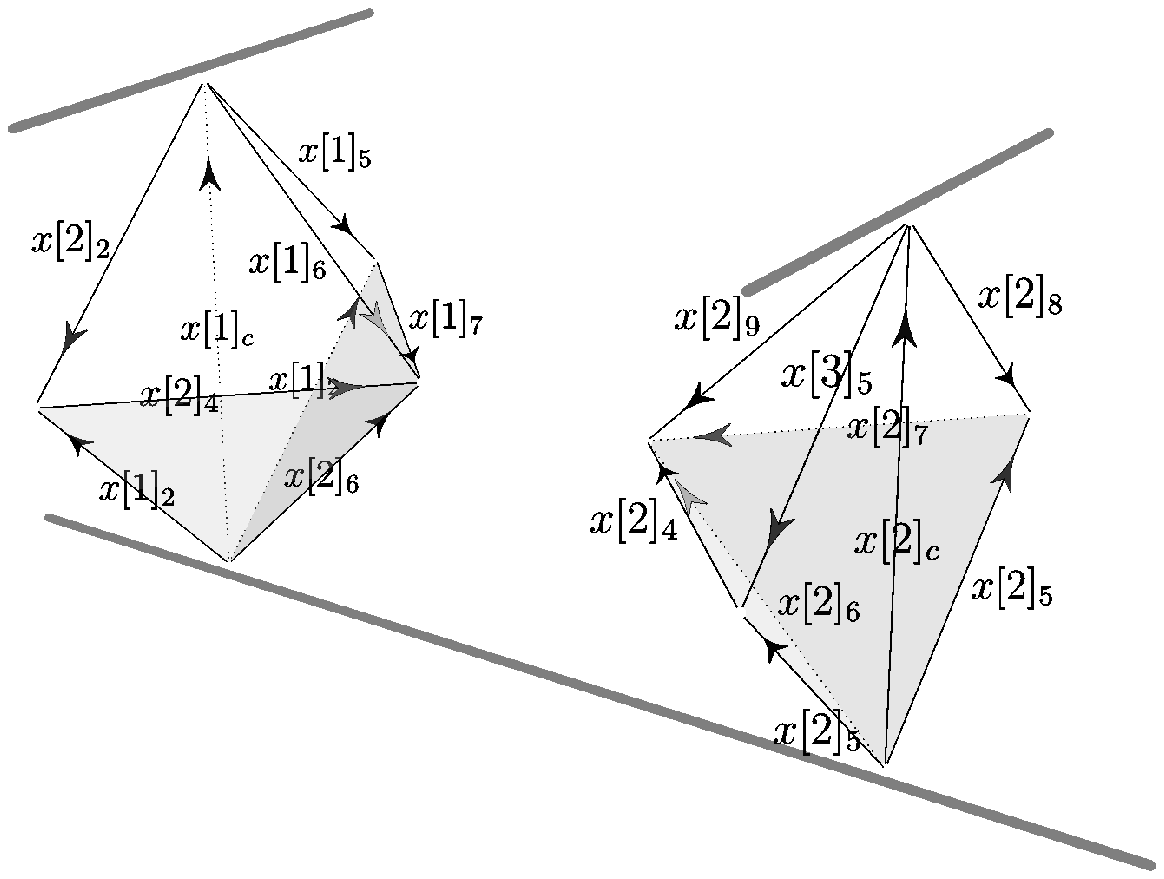}
  \end{minipage}
  \hfill
  \begin{minipage}{0.28\linewidth}
    \centering
    \includegraphics[scale=0.7]{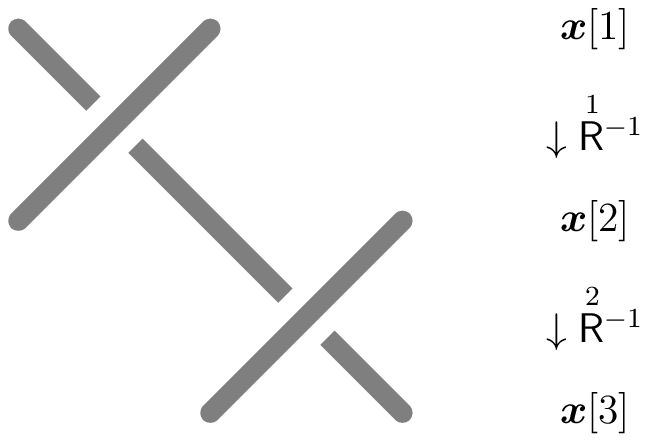}

    \bigskip    \bigskip

    \includegraphics[scale=0.8]{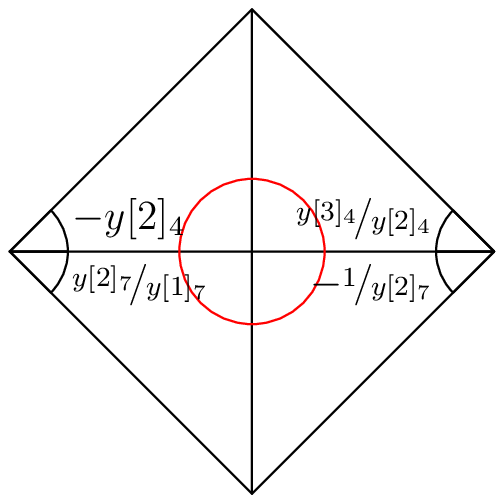}
  \end{minipage}
  \caption{Gluing of octahedra (left) assigned to crossing (right
    top), and a developing map (right bottom).
    A consistency condition is read from a  red  circle.}
  \label{fig:glue3}
\end{figure}

Same is  a case of  a cluster pattern
\begin{equation*}
  \boldsymbol{x}[1]  \xrightarrow[]{\overset{1\phantom{-1}}{\mathsf{R}^{\smash{-1}}}}
  \boldsymbol{x}[2]  \xrightarrow[]{\overset{2\phantom{-1}}{\mathsf{R}^{\smash{-1}}}}
  \boldsymbol{x}[3] .
\end{equation*}
We have octahedra as in Fig.~\ref{fig:glue3}, and 
we can check a consistency condition in the developing map as
\begin{equation*}
  \left( 1 - \frac{y[1]_7}{y[2]_7} \right) \cdot
  \frac{1}{1+y[2]_4} \cdot
  \left( 1 - \frac{y[2]_4}{y[3]_4} \right) \cdot
  \frac{1}{1+ \frac{1}{y[2]_7}} = 1.
\end{equation*}

\begin{figure}[tbhp]
  \begin{minipage}{0.7\linewidth}
    \centering
    \includegraphics[scale=0.8]{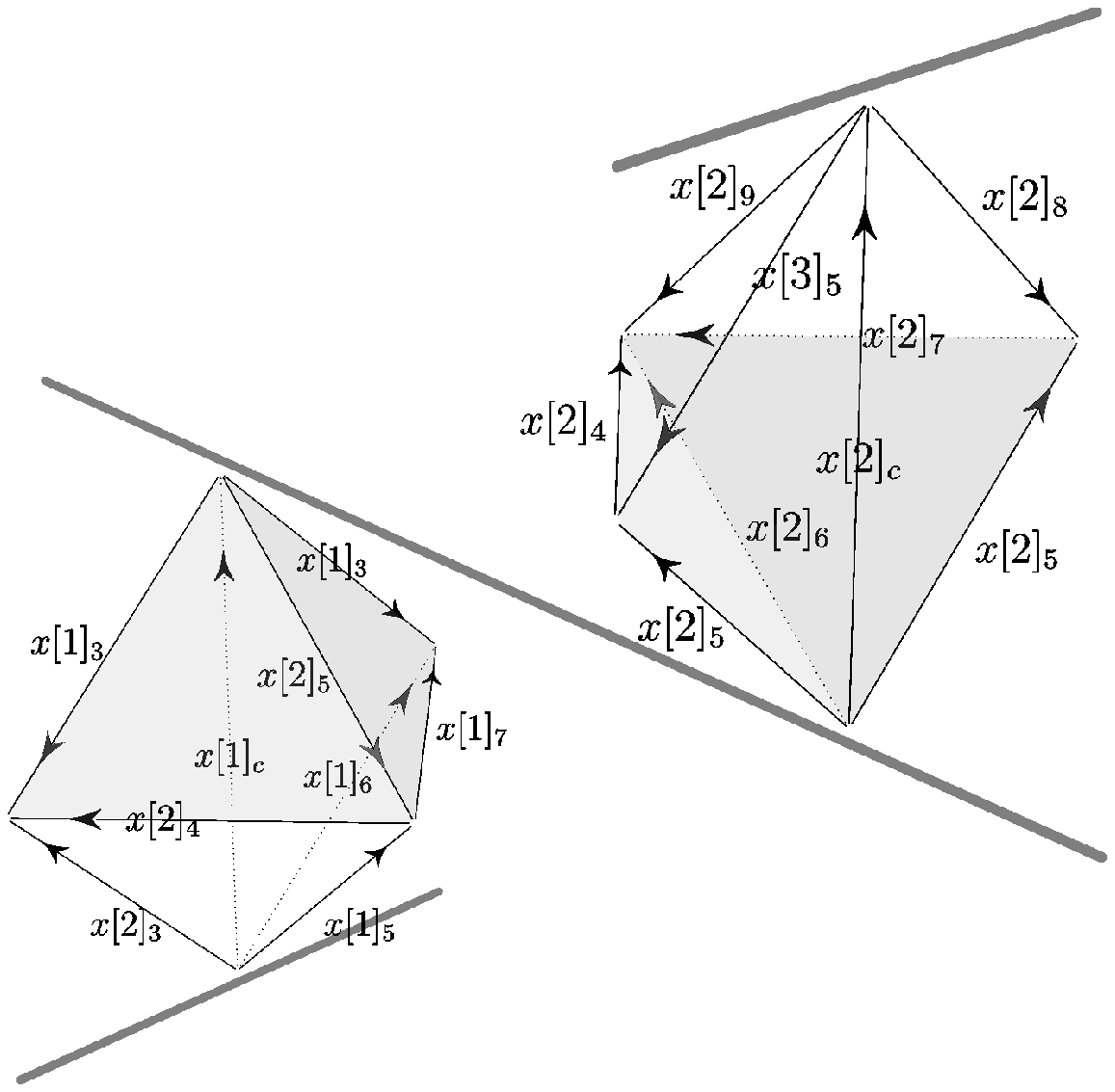}
  \end{minipage}
  \hfill
  \begin{minipage}{0.28\linewidth}
    \centering
    \includegraphics[scale=0.7]{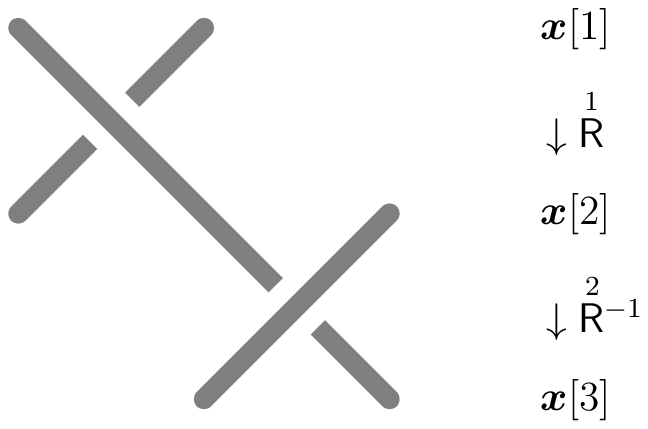}

    \bigskip    \bigskip

    \includegraphics[scale=0.8]{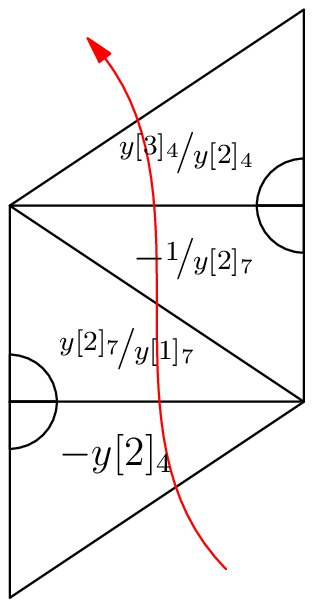}
  \end{minipage}
  \caption{Gluing of octahedra (left) assigned to
    crossing (right top), and
    a developing map (right bottom).
    A completeness condition is read from
    a red curve.}
  \label{fig:glue2}
\end{figure}

A completeness condition follows from alternating crossings.
In the case that the cluster pattern is given by
\begin{equation*}
  \boldsymbol{x}[1]  \xrightarrow[]{\overset{1}{\mathsf{R}}}
  \boldsymbol{x}[2]  \xrightarrow[]{\overset{2\phantom{-1}}{\mathsf{R}^{\smash{-1}}}}
  \boldsymbol{x}[3] ,
\end{equation*}
octahedra
are  attached to each crossing as in Fig.~\ref{fig:glue2}.
See that identical edges have same complex parameters
$x[2]_6=x[1]_3$ and $x[1]_7=x[2]_7$
due to~\eqref{R_on_x}.
Then we can check the completeness condition
as
\begin{equation*}
  \begin{aligned}[b]
    \frac{
      1+ \frac{1}{y[2]_4}}{
      1 - \frac{y[2]_7}{y[1]_7}
    } \cdot
    \frac{1 + \frac{1}{y[2]_7}}{
      1- \frac{y[2]_4}{y[3]_4}
    }
    & =
    y[1]_2 \, y[1]_3
    \\
    & = 1 .
  \end{aligned}
\end{equation*}
Here the last equality follows from~\eqref{y_from_x}.

\begin{figure}[tbhp]
  \begin{minipage}{0.7\linewidth}
    \centering
    \includegraphics[scale=0.8]{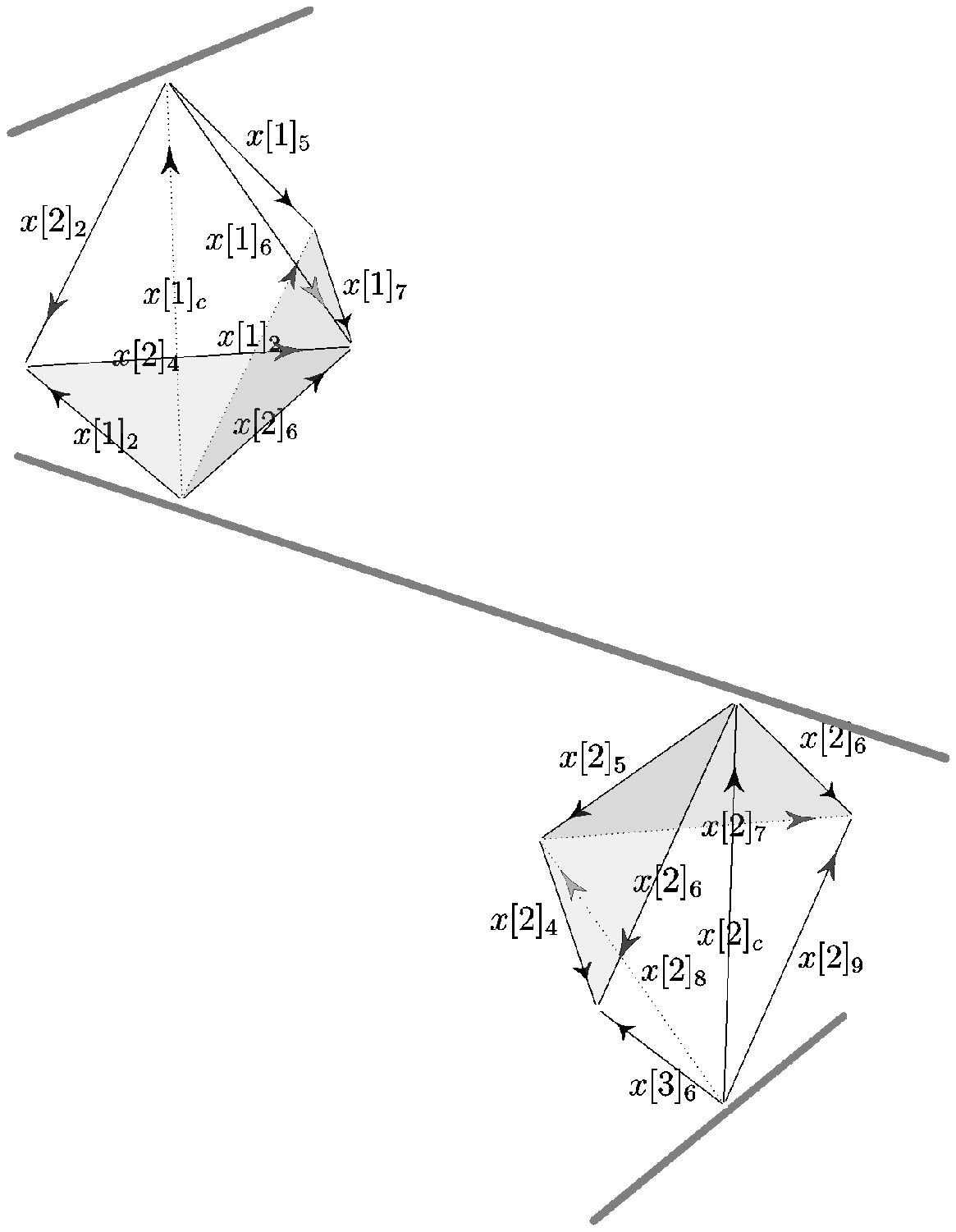}
  \end{minipage}
  \hfill
  \begin{minipage}{0.28\linewidth}
    \centering
    \includegraphics[scale=0.7]{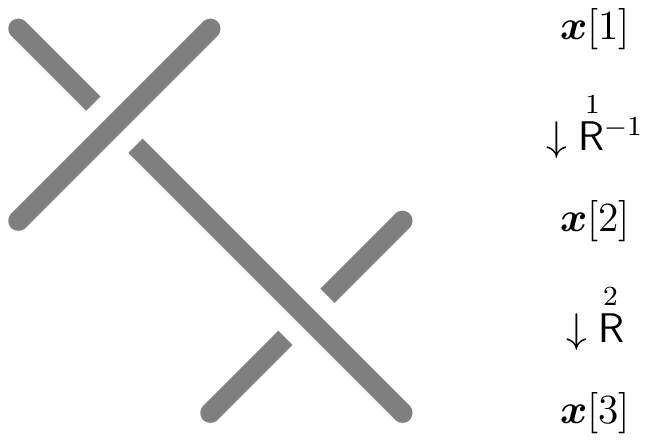}

    \bigskip    \bigskip
    
    \includegraphics[scale=0.8]{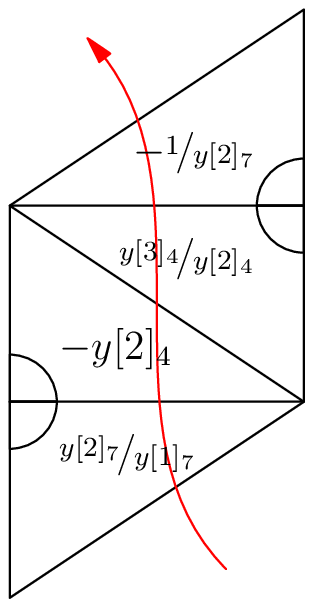}
  \end{minipage}
  \caption{Gluing of octahedra (left) assigned to crossing (right
    top),
    and a developing map (right bottom).}
  \label{fig:glue4}
\end{figure}

We have Fig.~\ref{fig:glue4} for a cluster pattern
\begin{equation*}
  \boldsymbol{x}[1]  \xrightarrow[]{\overset{1\phantom{-1}}{\mathsf{R}^{\smash{-1}}}}
  \boldsymbol{x}[2]  \xrightarrow[]{\overset{2}{\mathsf{R}}}
  \boldsymbol{x}[3] .
\end{equation*}
By use of~\eqref{y_from_x}, we have a completeness condition
\begin{equation*}
  \begin{aligned}[b]
    \frac{1- \frac{y[1]_7}{y[2]_7}}{1+y[2]_4} \cdot
    \frac{1 - \frac{y[3]_4}{y[2]_4}}{1+y[2]_7}
    & = y[1]_2 \, y[1]_3
    \\
    & = 1.
  \end{aligned}
\end{equation*}

Other cases can be checked in a similar manner,
and the claim follows.
\end{proof}

We note that in the above proof the completeness condition
is
\begin{equation}
  y[1]_{3i-1} \, y[1]_{3i} =1,
  \quad
  \text{for $i=1, 2, \dots, n$,}
\end{equation}
which follows from the definition of the $y$-variables \eqref{y_from_x}.

\section{Examples}

\subsection{\mathversion{bold}Figure-eight knot $4_1$}

We study the figure-eight knot whose braid group presentation is 
$\sigma_1 \, \sigma_2^{~-1} \, \sigma_1 \, \sigma_2^{~-1}$.
The cluster pattern for $4_1$ is thus
\begin{equation*}
  \boldsymbol{x}[1]
  \xrightarrow[]{\overset{1}{\mathsf{R}}}
  \boldsymbol{x}[2]
  \xrightarrow[]{\overset{2\phantom{-1}}{\mathsf{R}^{\smash{-1}}}}
  \boldsymbol{x}[3]
  \xrightarrow[]{\overset{1}{\mathsf{R}}}
  \boldsymbol{x}[4]
  \xrightarrow[]{\overset{2\phantom{-1}}{\mathsf{R}^{\smash{-1}}}}
  \boldsymbol{x}[5] .
\end{equation*}
We can check that
$\boldsymbol{x}[1]=\boldsymbol{x}[5]$ is fulfilled by, for example, 
\begin{equation*}
  \boldsymbol{x}[1]
  =
  \left(
    x_1, x_2, x_2, 1, x_1\, x_2, x_1^{~2}  \,x_2,
    x_1, -x_2, -x_2, 1
  \right),
\end{equation*}  
where $(x_1, x_2) = (\E^{2 \pi \I/3}, 0)$.
To compute the complex volume of $4_1$, we set
$(x_1, x_2) = (\E^{2\pi \I/3} + \delta, \delta)$
with $\delta \in \mathbb{R}_{>0}$,
and take a limit  $\delta\to 0$ at the last.
We  have checked numerically that~\eqref{complex_from_R} gives
$\I \cdot 2 \, D(\E^{\pi \I/3})
=
\I \cdot 2.02988 \cdots $ as desired~\cite{WPThurs80Lecture}.

\subsection{\mathversion{bold} Trefoil knot $3_1$}

Next example is the trefoil $3_1$,
which is not hyperbolic.
The braid group presentation for $3_1$ is $\sigma_1^{~3}$,
and
its cluster pattern is 
\begin{equation*}
  \boldsymbol{x}[1]
  \xrightarrow[]{\overset{1}{\mathsf{R}}}
  \boldsymbol{x}[2]
  \xrightarrow[]{\overset{1}{\mathsf{R}}}
  \boldsymbol{x}[3]
  \xrightarrow[]{\overset{1}{\mathsf{R}}}
  \boldsymbol{x}[4].
\end{equation*}
We solve $\boldsymbol{x}[1] = \boldsymbol{x}[4]$ by choosing 
an initial cluster variable as
\begin{align}
  \label{3_1-initial}
  \boldsymbol{x}[1] 
  =
  (x_1, x_2, x_2, 1,  x_1 x_2, x_1^{~2} x_2, 1)
\end{align}
and get 
$x_1=-\frac{1+\I}{2}$ in a limit $x_2 \to 0$.
We check numerically that ~\eqref{complex_from_R} gives
$-8.22467 \cdots \simeq -\frac{5}{6} \pi^2$.
It agrees with the Chern-Simons invariant of $3_1$,
which is also given from asymptotic limit of the Kashaev
invariant~\cite{KashTirk00,Zagier01,HikamiKirillov03}.

\subsection{Interpretation of Initial Cluster Variables}
In the above examples, we have   singular solutions
such as
\begin{equation}
  \label{cancel_x}
  \frac{x[1]_2}{x[1]_1}, \,
  \frac{x[1]_3}{x[1]_4} \to 0 .
\end{equation}
This  condition for  initial cluster variables
denotes that
a cancellation of
the two additional balls
occurs
by connecting to the tubular neighbor of knot $K$ as
explained  in~\cite{Weeks05}
(see also~\cite{ChoMurYok09a}).

\begin{prop}
  \label{prop:cancel}
  In the setting of Thm~\ref{thm:main2},
  when we set an initial cluster $x$-variable as~\eqref{cancel_x},
  we get 
  a canonical triangulation of $S^3\setminus K$.
\end{prop}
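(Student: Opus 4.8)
The plan is to interpret the singular initial data \eqref{cancel_x} through the developing map and show that it forces the two extra ideal vertices $v_2, v_3$ of Fig.~\ref{fig:octahedron} to develop onto the cusp of the knot, so that the three-cusped triangulation of $S^3 \setminus (K \cup \{v_2, v_3\})$ obtained in Theorem~\ref{thm:main2} descends to the canonical one-cusped triangulation of $S^3 \setminus K$ used by SnapPy~\cite{Weeks05,ChoMurYok09a}.

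First I would translate \eqref{cancel_x} into gluing data. By \eqref{y_from_x} the vanishing ratios $x[1]_2/x[1]_1, x[1]_3/x[1]_4 \to 0$ are governed by the $y$-variables on the edges meeting $v_2$ and $v_3$, so I would substitute them into the moduli of Table~\ref{tab:modulus_R} and the edge relations \eqref{Zickert_c} to read off the limiting cross-ratios and the resulting degeneration of the horospherical cross-sections at $v_2$ and $v_3$. The expected picture is that the spherical vertex links of $v_2, v_3$ are joined to the toroidal link of $K$ by tubes, the analytic shadow of the two balls connecting to the tubular neighborhood of $K$, so that the three cusp cross-sections merge into the single torus of $S^3 \setminus K$.

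Next I would carry out the combinatorial cancellation. Because our octahedral decomposition is, by construction, the standard one of \cite{Weeks05,ChoMurYok09a}, the two spherical cusps at $v_2$ and $v_3$ may be filled by coning and collapsing the tetrahedra incident to them; the content of Proposition~\ref{prop:cancel} is that the degeneration dictated by \eqref{cancel_x} is precisely the one under which this collapse is consistent, leaving no material vertex and yielding an ideal triangulation of $S^3 \setminus K$. I would verify this by checking that the vertex-link merging identifies the two extra cusps with the knot cusp and that the surviving edge identifications reproduce SnapPy's canonical triangulation cell by cell.

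The main obstacle is the bookkeeping of this collapse: one must confirm that filling $v_2, v_3$ leaves a genuine manifold, with no non-manifold point and no degenerate tetrahedron persisting, and that the gluing and completeness conditions established in the proof of Theorem~\ref{thm:main2} pass to the limit, so that the limiting shapes still solve Thurston's gluing equations for the reduced complex. Equally delicate is confirming that the combinatorial result is genuinely SnapPy's canonical triangulation of $S^3 \setminus K$ rather than merely some ideal triangulation, which requires a one-to-one matching of cells before and after the cancellation.
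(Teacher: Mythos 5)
Your overall picture---that the two extra vertices $v_2,v_3$ carry spherical (ball) links which must be joined to the tubular neighborhood of $K$ by tubes---is the right one, but the concrete mechanism you propose is not the paper's, and as stated it has a gap. The paper does not cone off or collapse the tetrahedra incident to $v_2$ and $v_3$; it \emph{adds} cells: for each of the two extra vertices it inserts a ``triangular pillow with a pre-drilled tube'' in the sense of Weeks~\cite{Weeks05}, built from two ideal tetrahedra glued along two faces, and glues one pillow onto the face $x_2x_3x_4$ of $\triangle_N$ and the other (with opposite vertex ordering) onto the face $x_1x_2x_3$ of $\triangle_W$ in the octahedron at the first crossing. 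The drilled tubes inside these pillows are exactly the tubes connecting $v_2$ and $v_3$ to $v_1$ (a point on $K$), which is what produces a genuine ideal triangulation of $S^3\setminus K$. The logical direction is also reversed relative to your plan: the Zickert edge relation~\eqref{Zickert_c} applied to the pillow forces its edge parameters to satisfy $c_3/c_2=0$, and since identified edges carry identical parameters this \emph{derives} the condition $x[1]_2/x[1]_1=x[1]_3/x[1]_4=0$ of~\eqref{cancel_x}; you instead take~\eqref{cancel_x} as input and try to read a degeneration out of Table~\ref{tab:modulus_R}.

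The step in your argument that would fail, or at least requires an idea you have not supplied, is ``the two spherical cusps at $v_2$ and $v_3$ may be filled by coning and collapsing the tetrahedra incident to them.'' Collapsing the tetrahedra of the octahedral decomposition that touch $v_2$ and $v_3$ destroys the decomposition itself (every octahedron has $v_2$ and $v_3$ among its vertices), and there is no reason the quotient complex is a triangulation of $S^3\setminus K$, let alone a manifold. The pillow insertion avoids this entirely: nothing is collapsed, the original octahedra survive intact, and the only price is two flat tetrahedron pairs whose extended Rogers dilogarithm contributions cancel because the two pillows have opposite vertex orderings---which is also why the complex volume formula~\eqref{complex_from_R} is unaffected, a point your proposal does not address. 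If you want to salvage your route, you would need to exhibit the tubes concretely inside the simplicial complex, and that is precisely what the pre-drilled pillow construction does.
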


We should note that
such cancellation can occur under
other choices of initial cluster variables.


\begin{proof}[Proof of Prop.~\ref{prop:cancel}]
  We need to connect two balls at $v_2$ and $v_3$ (see
  Fig.~\ref{fig:octahedron}) to the tubular neighbor of knot $K$ to 
  get a triangulation of $S^3 \setminus K$.
  For this purpose, we introduce  a triangular  pillow  with a
  pre-drilled tube 
  as in   Fig.~\ref{fig:pillow_for_N}.
  The pillow is constructed from two hyperbolic tetrahedra as in
  Fig.~\ref{fig:pillow_for_N},
  and
  we see 
  that there exists a drilled tube connecting two vertices
  (see~\cite{Weeks05}). 
  By use of other hyperbolic tetrahedra whose vertex orderings are
  opposite to those in Fig.~\ref{fig:pillow_for_N},
  we have another type of a triangular  pillow as in
  Fig.~\ref{fig:pillow_for_W}.

  In both Figs.~\ref{fig:pillow_for_N}
  and~\ref{fig:pillow_for_W},
  we assign edge parameters $c_a$  for each edge.
  Shape parameters of tetrahedra are given from~\eqref{Zickert_c},
  and we get
  \begin{equation}
    \label{c_pillow}
    \frac{c_3}{c_2}=0 .
  \end{equation}
  Because of their opposite vertex orderings,
  a sum of the extended Rogers dilogarithm functions~\eqref{extended_Rogers}
  for two pillows vanishes.

  We insert and 
  glue the pillow in Fig.~\ref{fig:pillow_for_N}
  (resp. Fig.~\ref{fig:pillow_for_W})
  to  the triangular
  surface $x_2 x_3 x_4$  in $\triangle_N$
  (resp.  $x_1 x_2 x_3$ in $\triangle_W$)
  in
  the octahedron assigned to the first crossing~$\mathsf{R}$.
  Pre-drilled tubes of the pillows
  connect both vertices $v_2$ and $v_3$ to $v_1$
  in  Fig.~\ref{fig:octahedron}.
  To conclude,
  we obtain a
  valid triangulation of $S^3 \setminus K$.
  As identical edges have same edge parameters,
  we find that
  a condition~\eqref{c_pillow} gives
  $\frac{x[1]_2}{x[1]_1} = \frac{x[1]_3}{x[1]_4}=0$.
\end{proof}

\begin{figure}[thbp]
  \centering
  \begin{minipage}{0.45\linewidth}
    \centering
    \includegraphics{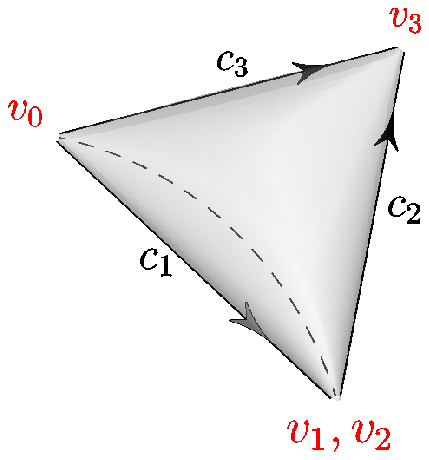}
  \end{minipage}
  \hfill
  \begin{minipage}{0.45\linewidth}
    \centering
    \includegraphics{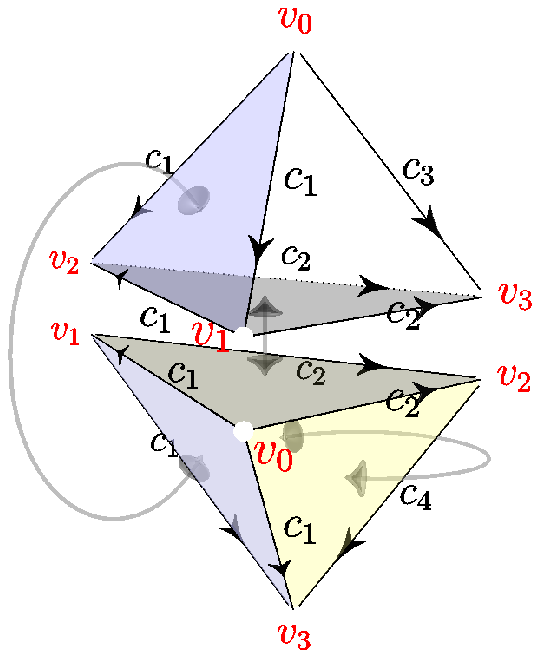}
  \end{minipage}
  \caption{A pillow with a pre-drilled tube (left) is constructed
    from two ideal tetrahedra (right) by gluing colored faces together.
    A dashed curve denotes a  tube connecting two vertices.
    Here $c_a$ is a edge parameter.
  }
  \label{fig:pillow_for_N}
\end{figure}

\begin{figure}[hbtp]
  \centering
  \begin{minipage}{0.45\linewidth}
    \centering
    \includegraphics{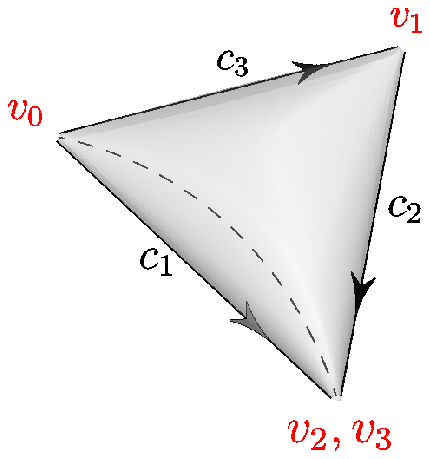}
  \end{minipage}
  \hfill
  \begin{minipage}{0.45\linewidth}
    \centering
    \includegraphics{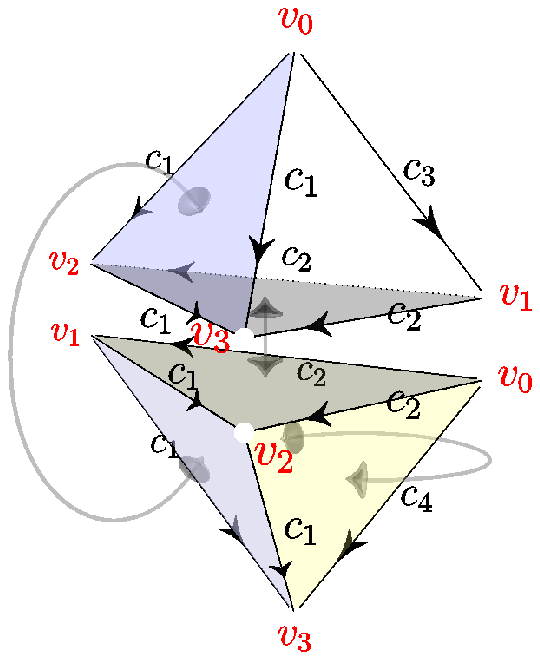}
  \end{minipage}
  \caption{Another pillow with a pre-drilled tube (left) is given from
    two hyperbolic tetrahedra (right).
  }
  \label{fig:pillow_for_W}
\end{figure}

\section*{Acknowledgments}
The authors would like to thank Jun~Murakami for stimulating
discussions and for comments on the manuscript.
Thanks are also to Rinat Kashaev for bringing~\cite{Dynnik02a} to our
attention.
The work of KH is supported in part by JSPS KAKENHI Grant Number~23340115,
24654041.
The work of RI is partially supported by JSPS KAKENHI Grant Number~22740111.


\end{document}